\newcommand{\R}{\mathbb{R}}
\newcommand{\Z}{\mathbb{Z}}
\newcommand{\T}{\mathbb{T}}
\newcommand{\cC}{\mathcal{G}}
\newcommand{\cU}{\mathcal{U}}
\newcommand{\cV}{\mathcal{V}}
\newcommand{\Dt}{{\Delta t}}
\newcommand{\Da}{D_{\Dt}^{\alpha}}
\newcommand{\ra}{\rho_\alpha}
\newcommand{\ovu}{\overline{u}}
\newcommand{\unu}{\underline{u}}
\newcommand{\pdc}{\partial_{t}^{\alpha}}
\newcommand{\pdh}{\partial_{t}^{\frac 1 2}}
\theoremstyle{definition}
\newtheorem{theorem}{Theorem}
\numberwithin{theorem}{section}
\newtheorem{lemma}[theorem]{Lemma}
\newtheorem{corollary}[theorem]{Corollary}
\newtheorem{proposition}[theorem]{Proposition}
\newtheorem{definition}[theorem]{Definition}
\newtheorem{remark}[theorem]{Remark}
\numberwithin{equation}{section}
\begin{document}
 
\renewcommand{\thefootnote}{\fnsymbol{footnote}}	
\title{Approximation  of  Hamilton-Jacobi equations with Caputo time-fractional derivative }
\author{Fabio Camilli\footnotemark[1] \and Serikbolsyn Duisembay\footnotemark[2]}
\date{\today}
\footnotetext[1]{Dip. di Scienze di Base e Applicate per l'Ingegneria,  ``Sapienza'' Universit{\`a}  di Roma, via Scarpa 16,	00161 Roma, Italy, ({\tt e-mail: fabio.camilli@sbai.uniroma1.it})}
\footnotetext[2]{ King Abdullah University of Science and Technology
	(KAUST),  CEMSE division, Thuwal 23955-6900, Saudi Arabia, ({\tt e-mail: serikbolsyn.duisembay@kaust.edu.sa})}
\footnotetext{S. Duisembay was partially supported by KAUST baseline funds and KAUST OSR-CRG2017-3452.}	
\maketitle
\begin{abstract}
In this paper, we investigate the numerical approximation of Hamilton-Jacobi equations with the Caputo time-fractional derivative. We introduce an explicit in time discretization of the Caputo derivative   and  a   finite difference scheme for the  approximation of the Hamiltonian. We show that the  approximation scheme so obtained  is stable under an appropriate   condition on the discretization parameters and converges to the unique viscosity solution of the Hamilton-Jacobi equation. 
\end{abstract}
 \begin{description}
	\item [{\bf AMS subject classification}:] 35R11,   65L12, 49L25.
	\item[{\bf Keywords}:]   Fractional Hamilton-Jacobi  equation, Caputo time derivative,   finite difference, convergence.
\end{description}

\section{Introduction}
We define a class of finite difference  schemes for the time-fractional  Hamilton-Jacobi equation 
\begin{equation}\label{eq:HJ_caputo}
 \partial_t^\alpha u (t,x) + H(t,x,u(t,x),Du(t,x)) = 0  \qquad (t,x) \in  Q_T:=(0,T] \times \T^d,    
\end{equation}
 where $\T^d$ is unit torus in $\R^d$. The symbol  $\partial_t^\alpha$, for $0 < \alpha \le 1$, denotes  the Caputo derivative 
\begin{equation*}
\partial_t^\alpha u(t,x) = \frac{1}{\Gamma(1-\alpha)}\int_{0}^{t}\frac{\partial_s u(s,x)}{(t-s)^{\alpha}}ds  
\end{equation*}
(note that $\partial_t^\alpha$ reduces  to the standard time derivative $\partial_t$ for $\alpha=1$) and $Du$   the gradient with respect to $x$. Equation \eqref{eq:HJ_caputo} is completed with the initial condition
\begin{equation}\label{IC}
u(0,x)=u_0(x),\qquad x\in\T^d.
\end{equation}
In recent times,  there has been an increasing interest in the study of differential equations with time-fractional derivatives. Indeed, this kind of differential operators allows us to introduce new  phenomena in differential models   such as memory and trapping effects  \cite{m,ms,po,t}. Also, the numerical approximation of differential equations with fractional time-derivative has been extensively analyzed  \cite{bdst,lx,lmz}.\par
Since in general smooth solutions to  Hamilton-Jacobi equations are not expected to exist, for equation \eqref{eq:HJ_caputo}  a theory of  weak solutions, in viscosity sense,   has been introduced,  in \cite{gn,n,ty} (see also \cite{su} for a different, but closely related, notion of weak solution).
Most of the results and techniques which hold in the  classical case, i.e. for  $\alpha=1$,
have been extended to the fractional case in order to prove the well-posedness of the Hamilton-Jacobi equation \eqref{eq:HJ_caputo}. \par
In the classical case, one of the most important properties of the  viscosity solution theory   is the stability with respect to the uniform convergence (see \cite{bcd}). Starting with the seminal paper \cite{cl}, this property has generated   an enormous literature concerning the numerical approximation of Hamilton-Jacobi equations (see for example \cite{ff,o,se} and reference therein). Stability with respect to the  uniform convergence is inherited  by viscosity solutions of the Hamilton-Jacobi equation \eqref{eq:HJ_caputo}. Following \cite{cl}, we define a  general class of   finite difference    schemes for \eqref{eq:HJ_caputo}. We show that, under an appropriate Courant–Friedrichs–Lewy (CFL)   condition of the type $\Delta t^\alpha=O(\Delta x)$, these schemes are monotone, stable and consistent. Moreover, relying on an adaptation of the classical Barles-Souganidis convergence Theorem  \cite{bs}, we prove that  the numerical solutions generated  by these schemes converge  to the unique viscosity solution of the limit problem. In order  to verify the properties of the proposed schemes,  we perform several numerical tests, and to analyze the order of the approximation error,  we also compute exact solutions for some time-fractional Hamilton-Jacobi equations. 
We  have only recently become aware that a similar problem  was considered in \cite{glm}.\par
The rest of the paper is organized as follows. In Section \ref{s2}, we shortly review some basic properties of the theory of viscosity solution for \eqref{eq:HJ_caputo}. Section \ref{s3} is devoted to the description of a class of finite difference schemes and their properties. In Section \ref{s4}, we prove  a convergence result   and in Section \ref{s5} we carry out some numerical tests.

\section{Viscosity solutions for Hamilton-Jacobi equation with  time-fractional derivative}\label{s2}
In this section, we briefly review  definitions and some results for the continuous problem \eqref{eq:HJ_caputo}
(we refer to \cite{gn,n} for more details). \\
Throughout the paper, a function $u$ on $\T^d$ will be   equivalently regarded as a function defined on $\R^d$ which is $\mathbb{Z}^d$ periodic.
We consider the following assumptions on the Hamiltonian  $H$ and on the initial datum $u_0$.
\begin{enumerate}
	\item[(H1)] $H:\overline{Q_T} \times \R \times \R^d\to\R$ is continuous;
	\item[(H2)] there exists a modulus $\omega : [0,\infty) \to [0,\infty)$ such that
	\[
	|H(t,x,r,p) - H(t,y,r,p)| \leq \omega(|x-y|(1+|p|))
	\]
	for all $(t,x,r,p), \,(t,y,r,p) \in [0,T] \times \T^d \times \R \times \R^d$;
	\item[(H3)] $r \mapsto H(t,x,r,p)$ is nondecreasing for all $(t,x,p) \in {\overline Q_T}\times\R^d$;
	\item[(H4)] $u_0:\T^d\to\R$ is a continuous function.
\end{enumerate}
For a function $f : [0,T] \to \R$ such that $f  \in C^1((0,T]) \cap C([0,T])$ and $f' \in L^1((0,T))$, the Caputo time fractional derivative is defined by
\begin{equation}\label{eq:caputo_der}
\partial_t^\alpha f(t) = \frac{1}{\Gamma(1-\alpha)}\int_{0}^{t}\frac{f'(s)}{(t-s)^{\alpha}}ds, 
\end{equation}
for any $t\in (0,T]$. Using integration by parts and change of variables,   \eqref{eq:caputo_der} can be rewritten as
\begin{equation}\label{eq:caputo_bis}
\pdc f(t) = J[f](t)+K_{(0,t)}[f](t),
\end{equation}
where
\begin{align*}
&J[f](t) := \frac{f(t)-f(0)}{t^\alpha \Gamma(1-\alpha)},\\
&K_{(0,t)}[f](t) :=  \frac{\alpha}{\Gamma(1-\alpha)} \int_{0}^{t} \frac{f(t)-f(t-\tau)}{\tau^{\alpha+1}} d \tau.
\end{align*}  
By natural extension, we also define  $K_{(a,b)}[f](t)$ for any $a,b$ with $0\le a<b\le t$.
The advantage of rewriting the Caputo derivative in the form \eqref{eq:caputo_bis} is explained in \cite{acv,gn,ty}. \\
For a set $A\subset \T^d$ and a function $u:A\to\R$, we denote by $u^*$ and $u_*$ the upper and the lower semi-continuous envelopes of $u$, i.e.
\[
u^*(x)=\lim_{r\to 0}\sup\{u(y): y\in A\cap \overline{B(x,r)}\}
\]
and $u_*(x)=-(-u)^*$, where $B(x,r)$ is a the open  ball of centre $x$ and radius $r$. We also denote by $USC(\overline{Q_T})$ (resp., $ LSC(\overline{Q_T}) $)  the class of the upper semi-continuous 
(resp., lower semi-continuous) functions in $\overline{Q_T}$.\\ 
We give the definition of viscosity solution for 
\eqref{eq:HJ_caputo} (see \cite[Definiton 2.2]{n}).

	\definition
\begin{itemize}	Let $O\subset \T^d$. Then
\item [(i)] A function $u :[0,T]\times O\to \R$  is said  a viscosity subsolution  of \eqref{eq:HJ_caputo}  in $(a,T]\times O$  if 
$u^*<+\infty$  in $(a,T]\times O$ and 
\[
J[\varphi](\hat{t},\hat{x})+K_{(0,\hat t)}[\varphi](\hat{t},\hat{x}) + H(\hat{t},\hat{x},u^*(\hat{t},\hat{x}),D\varphi(\hat{t},\hat{x})) \leq 0,
\]
whenever $(\hat{t},\hat{x}) \in (a,T] \times O$
and $\varphi\in C^{1,1}((a,T]\times O)\cap C([0,T]\times O)$ satisfy
	\[
	\max_{[0,T] \times O}(u^*-\varphi) = (u^*-\varphi)(\hat{t},\hat{x}). \]
\item [(ii)] A function $u :[0,T]\times O\to \R$  is said  a viscosity supersolution  of \eqref{eq:HJ_caputo}  in $(a,T]\times O$  if 
$u_*>-\infty$  in $(a,T]\times O$ and 
\[
J[\varphi](\hat{t},\hat{x})+K_{(0,\hat t)}[\varphi](\hat{t},\hat{x}) + H(\hat{t},\hat{x},u^*(\hat{t},\hat{x}),D\varphi(\hat{t},\hat{x})) \geq 0,
\]
whenever $(\hat{t},\hat{x}) \in (a,T] \times O$
and $\varphi\in C^{1,1}((a,T]\times O)\cap C([0,T]\times O)$ satisfy
\[
\min_{[0,T] \times O}(u_*-\varphi) = (u_*-\varphi)(\hat{t},\hat{x}).
 \]
\item[(iii)] A function $u :[0,T]\times O\to \R$  is said  a viscosity solution  of \eqref{eq:HJ_caputo} in $(a,T]\times O$ if it is both a viscosity sub- and supersolution of   \eqref{eq:HJ_caputo} in $(a,T]\times O$.
\end{itemize}
 \enddefinition
 In the previous definition, the notation $C^{1,1}((a,T]\times O)$ 
 denotes the space of functions $\varphi$ such that $\varphi_t$, $\partial_t\varphi$ and $D\varphi$ are continuous in $(a,T]\times O$.

For    other equivalent definitions of viscosity solutions for \eqref{eq:HJ_caputo},  we refer to  \cite{gn}. \\
The first result is  a comparison principle  for \eqref{eq:HJ_caputo} (see \cite[Theorem 3.1]{gn}).
\theorem \label{thm:comp}
Assume (H1)-(H3). Let $u \in USC(\overline{Q_T})$ and $v \in LSC(\overline{Q_T})$ be a subsolution and a supersolution of \eqref{eq:HJ_caputo}, respectively. If $u(0,x) \leq v(0,x)$  for $x\in\T^d$, then $u \leq v$ on $\overline{Q_T}$.
\endtheorem
%
We also recall an existence result for  viscosity solutions of \eqref{eq:HJ_caputo} (see \cite[Theorem 4.2]{gn}).
\theorem \label{thm:exist}
Assume (H1). Let $u^- \in USC(\overline{Q_T})$ and $u^+ \in LSC(\overline{Q_T})$ be a subsolution and a supersolution of \eqref{eq:HJ_caputo} such that $(u^-)_* > - \infty$ and $(u^+)^*<+\infty$ on $\overline{Q_T}$. If $u^- \leq u^+$, then there exists a solution $u$ of \eqref{eq:HJ_caputo} that satisfies $u^- \leq u \leq u^+$ in $\overline{Q_T}$.
\endtheorem
By Theorems \ref{thm:comp} and \ref{thm:exist}, it follows an existence and uniqueness result for the solution of \eqref{eq:HJ_caputo}, \eqref{IC} (see \cite[Corollary 4.3]{gn}).
\begin{corollary}
	Assume (H1)-(H4). Then there exists a unique   viscosity solution of \eqref{eq:HJ_caputo} which satisfies the initial condition \eqref{IC}.
\end{corollary}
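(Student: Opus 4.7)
The plan is to derive the corollary from Theorems \ref{thm:comp} and \ref{thm:exist} by a Perron-type argument, combined with a uniform approximation of the initial datum to handle the merely continuous case.

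\emph{Uniqueness} is immediate: if $u_1,u_2$ are two viscosity solutions of \eqref{eq:HJ_caputo} satisfying $u_i(0,\cdot)=u_0$, then applying Theorem \ref{thm:comp} to the pair $(u_1,u_2)$ and then to $(u_2,u_1)$ gives $u_1\equiv u_2$ on $\overline{Q_T}$.

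For \emph{existence} I first dispatch the smooth case $u_0\in C^{1,1}(\T^d)$. The elementary identity $\partial_t^\alpha (t^\alpha)=\Gamma(\alpha+1)$ suggests the barriers
\[
u^{\pm}(t,x):=u_0(x)\pm C\, t^\alpha,
\]
which satisfy $\partial_t^\alpha u^{\pm}=\pm C\Gamma(\alpha+1)$ pointwise and have the correct trace $u^{\pm}(0,\cdot)=u_0$. Since $H$ is continuous and $Du_0$ is bounded on the compact torus, (H3) allows me to pick $C>0$ large enough so that
\[
C\Gamma(\alpha+1)+H\bigl(t,x,u_0(x)+C t^\alpha,Du_0(x)\bigr)\ge 0
\]
and the reversed inequality for $u^-$ hold on $\overline{Q_T}$. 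Hence $u^+$ is a classical, and therefore viscosity, supersolution, $u^-$ a subsolution, and trivially $u^-\le u^+$. Theorem \ref{thm:exist} then produces a viscosity solution $u$ with $u^-\le u\le u^+$, and the matching traces at $t=0$ force $u(0,\cdot)=u_0$.

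For a general continuous $u_0$ I would approximate uniformly by $u_0^n\in C^{1,1}(\T^d)$ and denote by $u^n$ the corresponding solutions from the previous step. For any constant $c>0$, since $\partial_t^\alpha$ annihilates constants and (H3) yields $H(t,x,u^m+c,Du^m)\ge H(t,x,u^m,Du^m)$, the function $u^m+c$ is a viscosity supersolution. Choosing $c=\|u_0^n-u_0^m\|_\infty$ and applying Theorem \ref{thm:comp} in both directions gives
\[
\|u^n-u^m\|_\infty\le\|u_0^n-u_0^m\|_\infty,
\]
so $(u^n)$ is Cauchy in $C(\overline{Q_T})$ with uniform limit $u$, and evaluation at $t=0$ shows $u(0,\cdot)=u_0$.

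The main technical step is then to verify that this uniform limit is itself a viscosity solution of \eqref{eq:HJ_caputo}. My plan is the standard Barles-type stability argument adapted to the Caputo operator: given a strict maximum $(\hat t,\hat x)\in(0,T]\times\T^d$ of $u^*-\varphi$ with $\varphi\in C^{1,1}((0,T]\times\T^d)\cap C([0,T]\times\T^d)$, select maxima $(t_n,x_n)\to(\hat t,\hat x)$ of $(u^n)^*-\varphi$, write the subsolution inequality at $(t_n,x_n)$, and pass to the limit. The nonstandard point is the passage to the limit inside $J[\varphi](t_n,x_n)$ and $K_{(0,t_n)}[\varphi](t_n,x_n)$: using the Lipschitz regularity of $\partial_t\varphi$ in $t$, the integrand of $K$ is dominated by an integrable function on $(0,t_n)$ uniformly in $n$, so dominated convergence applies and the continuity of $H$ closes the argument. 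This stability step, in particular the uniform integrability control of the $K$-kernel as $t_n\to\hat t$, is where I expect the main, though essentially routine, work.
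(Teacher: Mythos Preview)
The paper does not supply its own proof here: it merely records that the result follows from Theorems~\ref{thm:comp} and~\ref{thm:exist} and points to \cite[Corollary~4.3]{gn}. Your outline is therefore far more detailed than anything in the paper, and its overall architecture---barriers $u_0(x)\pm Ct^\alpha$ for $C^{1,1}$ data, a contraction estimate in the initial datum obtained by applying comparison to $u^m\pm\|u_0^n-u_0^m\|_\infty$, and a stability step for the uniform limit---is a correct and standard route to such a corollary.

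One minor imprecision is worth flagging. In the stability step you invoke ``Lipschitz regularity of $\partial_t\varphi$ in $t$'', but the class $C^{1,1}((0,T]\times\T^d)$ as defined in this paper only asks that $\partial_t\varphi$ and $D\varphi$ be continuous, not Lipschitz. This does not actually break your argument: what is really needed to pass to the limit in the $J$ and $K$ terms is the continuity of $t\mapsto J[\varphi](t,\cdot)+K_{(0,t)}[\varphi](t,\cdot)=\partial_t^\alpha\varphi(t,\cdot)$ on $(0,T]$, which is exactly \cite[Prop.~2.1]{n} (already used in the paper's proof of Lemma~\ref{lem:preliminary}). Citing that result makes the limit immediate and removes the need for a tailored dominated-convergence estimate. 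Alternatively, since the paper explicitly notes that stability under uniform convergence is inherited by viscosity solutions of \eqref{eq:HJ_caputo}, you could simply quote that fact from \cite{gn} rather than reprove it.
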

Existence and uniqueness results for the problem  of \eqref{eq:HJ_caputo}, \eqref{IC} in a bounded domain  with boundary conditions in viscosity sense  are discussed in \cite{n}.
\section{A class of finite difference schemes}\label{s3}
In this section we describe a finite difference scheme for the approximation of \eqref{eq:HJ_caputo}. 
For simplicity of notations, we  assume that the Hamiltonian $H$ depends only on the state and gradient variables,
i.e. $H=H(x,p)$, and that the dimension $d$ is equal to $2$. The extension  for general $H$ and $d$  will be clear from this special case. Moreover, we will always identify a function $\T^d$ with its $\mathbb{Z}^d$-periodic extension defined in all $\R^d$.
\\ Let $\T^2_h$ be a uniform grid on the torus with step $h$, (this supposes that $1/{h}$ is an integer), and denote by $x_{i,j}$   a generic point in  $\T^2_h$ (an anisotropic mesh with steps $h_1$ and $h_2$ is possible too and we have taken $h_1=h_2$ only for simplicity). The value  $U^n_{i,j}$ denotes the numerical approximation of the function $u$ at $(x_{i,j},t_n)=(ih,jh, n\Dt)$, $0\le i,j\le 1/h$, $n=0,\dots,N$ 
(assuming that $N=T/\Dt$ is an integer). We also denote by  $U^n$  the grid function taking the value $U^n_{i,j}$ at $x_{i,j}\in \T^2_h$.\\
We start by  describing the numerical approximation   of the Caputo time-fractional derivative $\partial_t^\alpha$  introduced in \cite{lx}. 
The numerical derivative is obtained by approximating the time-derivative  inside the fractional integral
in \eqref{eq:caputo_der} via finite difference and writing in compact form the expression so obtained. We approximate $\partial_t^\alpha u(x_{i,j},t_{n+1})$ by
\begin{equation}\label{eq:der1}
\begin{aligned}
\Da U^{n+1}_{i,j}  
& = \frac{1}{\Gamma(1-\alpha)} \sum_{m=0}^{n} \int_{t_{m}}^{t_{m+1}} \frac{U_{i,j}^{m+1}-U_{i,j}^m}{\Delta t}\frac{1}{(t_{n+1}-s)^\alpha} ds \\ 
& = \frac{1}{\Gamma(1-\alpha)(1-\alpha)} \sum_{m=0}^{n}\frac{U_{i,j}^{m+1}-U_{i,j}^m}{\Delta t}\left(  - \frac{1}{(t_{n+1}-t_{m+1})^{\alpha-1}}+\frac{1}{(t_{n+1}-t_{m})^{\alpha-1}}\right)  \\ 
& = \frac{1}{\Gamma(2-\alpha)} \sum_{m=0}^{n} \frac{(n+1-m)^{1-\alpha}-(n-m)^{1-\alpha}}{\Delta t^\alpha}(U_{i,j}^{m+1}-U_{i,j}^m),
\end{aligned}
\end{equation}
since $t_{n}-t_{m}=(n-m)\Dt$.
Defined 
\begin{equation}\label{eq:rho}
\ra = \Gamma(2-\alpha) \Dt^{\alpha},
\end{equation}
we obtain by \eqref{eq:der1}
\[
\begin{aligned}
\ra \Da U^{n+1}_{i,j}= & \sum_{m=0}^{n} \Big((n+1-m)^{1-\alpha}-(n-m)^{1-\alpha}\Big)(U_{i,j}^{m+1}-U_{i,j}^m) \\ 
& = - \Big((n+1)^{1-\alpha}-n^{1-\alpha}\Big) U_{i,j}^0 \\
& - \sum_{m=1}^{n} \Big( 2(n+1-m)^{1-\alpha}-(n+2-m)^{1-\alpha} - (n-m)^{1-\alpha} \Big) U_{i,j}^m + U_{i,j}^{n+1} \\
& = U_{i,j}^{n+1} - \sum_{m=0}^{n} c_m^{n+1} U_{i,j}^m ,
\end{aligned}
\]
where 
\[
\begin{aligned}
c_0^{n+1} &= (n+1)^{1-\alpha}-n^{1-\alpha}\\
c_m^{n+1} &= 2(n+1-m)^{1-\alpha}-(n+2-m)^{1-\alpha} - (n-m)^{1-\alpha}
\end{aligned}
 \] 
for $1 \leq m \leq n$.
Thus, the approximation of the Caputo time-derivative is given by 
\begin{equation}\label{approx_Caputo}
\Da U^{n+1}_{i,j} = \frac{1}{\ra}\left( U_{i,j}^{n+1} - \sum_{m=0}^{n} c_m^{n+1} U_{i,j}^m\right) .
\end{equation}
\begin{remark}
Denoted by $r_{\Dt}^{n+1}$ the truncation error, in \cite{lx} it is proved that 
\[ r_{\Dt}^{n+1}\le c_u \Dt^{2-\alpha} \]
where $c_u$ is a constant depending on the second order time-derivative of $u$. Hence the temporal accuracy of the scheme is
of order $2-\alpha$.
\end{remark}
In the following we summarize some properties of the coefficients  $c_m$ in \eqref{approx_Caputo}
\begin{lemma}\label{lem:c_properties}
\begin{enumerate}
\item[(i)] $c_m^{n+1}>0$ for $0\leq m \leq n$.
\item[(ii)] $c_{0}^{n+2}-c_0^{n+1}=-c_1^{n+2}$.
\item[(iii)] $c_{m+1}^{n+2}=c_{m}^{n+1} \text{ for } 1 \leq m \leq n$.
\item[(iv)] $\sum_{m=0}^{n} c_m^{n+1}=1$.
\end{enumerate}
\end{lemma}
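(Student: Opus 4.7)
The four items are consequences of the explicit form of the weights together with the monotonicity and concavity of the map $s\mapsto s^{1-\alpha}$ on $[0,\infty)$. For (i), I would split on $m$. The case $m=0$ reduces to $(n+1)^{1-\alpha}>n^{1-\alpha}$, which is immediate since $1-\alpha>0$. For $1\le m\le n$, set $k=n+1-m\ge 1$ and rewrite
\begin{equation*}
c_m^{n+1}=\bigl[k^{1-\alpha}-(k-1)^{1-\alpha}\bigr]-\bigl[(k+1)^{1-\alpha}-k^{1-\alpha}\bigr].
\end{equation*}
Positivity then follows from the strict concavity of $s\mapsto s^{1-\alpha}$ on $(0,\infty)$ when $0<\alpha<1$ (its second derivative equals $-\alpha(1-\alpha)s^{-1-\alpha}<0$), which forces the forward differences to be strictly decreasing.

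Items (ii) and (iii) are direct substitutions: for (ii), expand $c_0^{n+2}-c_0^{n+1}=(n+2)^{1-\alpha}-2(n+1)^{1-\alpha}+n^{1-\alpha}$ and compare with $c_1^{n+2}=2(n+1)^{1-\alpha}-(n+2)^{1-\alpha}-n^{1-\alpha}$; for (iii), replace $m$ by $m+1$ in the defining formula for $c_{m+1}^{n+2}$ and observe that the arguments $n+1-m,\, n+2-m,\, n-m$ are the same as those appearing in $c_m^{n+1}$.

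For (iv), the cleanest path is telescoping. Setting $a_j=j^{1-\alpha}$ and reindexing the $m$-sum via $k=n+1-m$,
\begin{equation*}
\sum_{m=0}^n c_m^{n+1}=(a_{n+1}-a_n)+\sum_{k=1}^n\bigl(2a_k-a_{k+1}-a_{k-1}\bigr),
\end{equation*}
which collapses to $a_1-a_0=1$. A slicker, conceptual alternative is to note that the approximation $\Da$ annihilates constant-in-time grid functions by construction; applying the identity $\ra \Da U^{n+1}_{i,j}=U^{n+1}_{i,j}-\sum_{m=0}^n c_m^{n+1} U^m_{i,j}$ to $U^m\equiv 1$ gives $0=1-\sum_{m=0}^n c_m^{n+1}$.

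No real obstacle is expected; the mild subtlety worth keeping in mind is that the arguments require $0<\alpha<1$ for strict concavity (the weights degenerate at $\alpha=1$, where the scheme reduces to classical forward Euler and the lemma is trivial).
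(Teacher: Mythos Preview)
Your argument is correct and follows essentially the same route as the paper: strict concavity of $s\mapsto s^{1-\alpha}$ for (i) (the paper phrases it via Jensen's inequality, you via decreasing forward differences), direct substitution for (ii)--(iii), and a telescoping computation for (iv). Your conceptual alternative for (iv)---that the discrete Caputo operator annihilates constants by construction---is a nice observation not made explicitly in the paper.
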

\begin{proof}
\begin{enumerate}
\item[(i)] 
When $m=0$, it is clear that $c_0^{n+1}>0$. Consider the case where $1\leq m \leq n$. Because of the strong concavity of the function $x^{1-\alpha}$ for $x \geq 0$, by Jensen's inequality, we have
\[
\frac{(n+2-m)^{1-\alpha} + (n-m)^{1-\alpha}}{2} < (n+1-m)^{1-\alpha}.
\]
Thus, it follows that $c_m^{n+1}>0$.
\item[(ii)]
By definition,
\[
c_{0}^{n+2}-c_0^{n+1} = (n+2)^{1-\alpha}- 2 (n+1)^{1-\alpha}+n^{1-\alpha} = -c_1^{n+2}.
\]
\item[(iii)]
By definition,
\[
\begin{aligned}
c_{m+1}^{n+2} & = 2((n+2)-(m+1))^{1-\alpha}-((n+3)-(m+1))^{1-\alpha} - ((n+1)-(m+1))^{1-\alpha} \\
& = 2(n+1-m)^{1-\alpha}-(n+2-m)^{1-\alpha} - (n-m)^{1-\alpha} = c_m^{n+1}.
\end{aligned}
\]
\item[(iv)] We have
\[
\begin{aligned}
\sum_{m=0}^{n}& c_m^{n+1}  = (n+1)^{1-\alpha}-n^{1-\alpha} +  \sum_{m=1}^{n} \left(2 (n+1-m)^{1-\alpha} - (n+2-m)^{1-\alpha}- (n-m)^{1-\alpha} \right) \\
&=(n+1)^{1-\alpha}-n^{1-\alpha} +  \sum_{m=1}^{n} 2 (n+1-m)^{1-\alpha} - \sum_{m=0}^{n-1} (n+1-m)^{1-\alpha}- \sum_{m=2}^{n+1}(n+1-m)^{1-\alpha} \\
& = (n+1)^{1-\alpha}-n^{1-\alpha} + 2n^{1-\alpha} + 2 - (n+1)^{1-\alpha} - n^{1-\alpha} - 1 = 1.
\end{aligned}
\]
\end{enumerate}
\end{proof} 
For the approximation of  the Hamiltonian in \eqref{eq:HJ_caputo} we follow the approach in \cite{cl}.
We introduce the finite difference operators
\begin{equation} 
\label{eq:finite_diff}
(D_1^+ U )_{i,j} = \frac{ U_{i+1,j}-U_{i,j}   } {h} \quad \hbox{and }\quad  (D_2^+ U )_{i,j} = \frac{ U_{i,j+1}-U_{i,j}   } {h},
\end{equation}
and define
\begin{align}
[D_h U]_{i,j} =\left((D_1^+ U )_{i,j} , (D_1^+ U )_{i-1,j}, (D_2^+ U )_{i,j}, (D_2^+ U )_{i,j-1}\right) ^T.\label{eq:discrete_gradient}
\end{align}
In order to approximate the Hamiltonian $H$ in equation \eqref{eq:HJ_caputo}, we consider a  numerical Hamiltonian $g: \T^2 \times \R^4\to \R$,  $(x,q_1,q_2,q_3,q_4)\mapsto g\left(x,q_1,q_2,q_3,q_4\right)$
satisfying the following conditions:
\begin{itemize}
	\item[(G1)] $g$ is nonincreasing with respect to $q_1$ and $q_3$, and nondecreasing with respect to $q_2$ and $q_4$.
	\item[(G2)] $g$ is consistent with the Hamiltonian $H$, i.e. 
	\begin{displaymath}
	g(x,q_1,q_1,q_2,q_2)=H(x,q), \quad \forall x\in \T^2, \forall q=(q_1,q_2)\in \R^2.
	\end{displaymath}
	\item[(G3)]  $g$ is locally Lipschitz continuous.
	\item[(G4)] There exists a constant $C$ such that
	\[
	\left| \frac{\partial g}{\partial x}(x,q_1,q_2,q_3,q_4)\right| \le C (1+|q_1|+|q_2|+|q_3|+|q_4|), \quad \forall x \in \T^2, \ \forall
	q_1,q_2,q_3,q_4 \in \R.
	\]
\end{itemize}
Hence, recalling the approximation \eqref{approx_Caputo} of the Caputo time derivative, we consider
 the explicit finite difference scheme
\begin{equation}\label{eq:HJ_discr}
\frac{1}{\rho_{\alpha}}\left( U_{i,j}^{n+1} - \sum_{m=0}^{n} c_m^{n+1} U_{i,j}^m\right) +  S(x_{i,j},h,U^{n}_{i,j} ,[U^{n}]_{i,j})=0,    
\end{equation}
for $i,j=1,\dots,1/h$, $n=0,\dots,N-1$, where $\ra$ is defined as in \eqref{eq:rho} and
\begin{equation}\label{eq:S}
S(x_{i,j},h,U^n_{i,j},[U^n]_{i,j})=  g(x_{i,j}, (D_1^+ U^n )_{i,j}, (D_1^+ U^n )_{i-1,j} , (D_2^+ U^n )_{i,j} , (D_2^+ U^n )_{i,j-1}  ).
\end{equation} 
In \eqref{eq:S}, $[U^n]_{i,j}$ represents the  set of the values of $U^n$
used to compute the scheme at $x_{i,j}$, except that the value $U^n_{i,j}$ itself,	 and $h$ is the space discretization step.
The scheme is completed with the initial condition
\begin{equation}\label{eq:IC}
 U_{i,j}^{0}= u_0(x_{i,j}).
\end{equation}
Note that $U^{n+1}$ depends on all the past history  $U^m$, $m=0,\dots,n$ of the solution.\\
For $\alpha=1$, the scheme \eqref{eq:HJ_discr}  reduces to the standard finite difference approximation 
	\begin{equation*}
     \frac{U_{i,j}^{n+1} -U_{i,j}^{n}}{\Dt}  +  S(x_{i,j},h,U^{n}_{i,j} ,[U^{n}]_{i,j})=0
	\end{equation*}
	of the Hamilton-Jacobi equation
	\[ \partial_t u+H(x,Du)=0. \]
\subsection{Stability properties of the scheme}
We set $Q^{h,\Dt}_n=\T^2_h\times \{0,\dots,n\Dt\}$ and we  denote by $\cC$ the space of the grid functions on $\T^2_h$
and by $\cC^n$, $n=0,\dots, N$, the set of the grid function on $Q^{h,\Dt}_n$, 
i.e.   
   \[ \cC^n=\left\{\cU=\{U^m\}_{m=0}^{n} \vert \, U^m\in\cC \right\}.  \]
Moreover, we set $\|U\|_\infty=\sup_{i,j}|U_{i,j}|$ for  $U=\{U_{i,j}\}_{i,j=0}^{1/h}\in \cC$, and $\|\cU\|_\infty= \sup_{m=0,\dots,n} \|U^m\|_{\infty}$ for   $\cU=\{U^m\}_{m=0}^{n}\in \cC^n$.\\
For $n\in \{0,\dots,N-1\},$, we define a map $G^n:\cC^n\to \cC$ by
\begin{equation}\label{key}
G^{n}(\cU)_{i,j}=\sum_{m=0}^{n} c_m^{n+1} U_{i,j}^m - \ra S(x_{i,j},h,U^{n}_{i,j} ,[U^{n}]_{i,j}).
\end{equation}
Hence, the scheme \eqref{eq:HJ_discr} can be rewritten in the equivalent iterative  form 
\begin{equation}\label{eq:HJ_expl}
U^{n+1}_{i,j}=G^{n}(\cU)_{i,j}, \qquad i,j=1,\dots,\frac 1 h,\,n=0,\dots,N-1
\end{equation}
\begin{definition}
	We say that the scheme \eqref{eq:HJ_expl} is \emph{monotone} if, for any $n=0,\dots,N-1$, $\cU, \cV\in \cC^n$, we have  that 
	\[  U^m\le V^m,\, m=0,\dots, n,\quad  \implies \quad  G^{n}(\cU)\le G^n(\cV), \] 
where the previous inequalities are intended in the sense of the comparison of components.
\end{definition}
Since the scheme \eqref{eq:HJ_expl} is explicit, for the monotonicity, we   need  some restriction on the approximation steps $h$ and $\Dt$, as  we will discuss later on.
\begin{proposition} \label{prop:G_properties}
Assume that the scheme \eqref{eq:HJ_discr}  is  monotone. Then, for $n=0,\dots,N-1$, we have
\begin{enumerate}
\item[(i)] $G^n(\cU+\lambda) = G^n(\cU)+\lambda$ for any $\lambda \in \R$, $\cU\in \cC^n$ (where we identify $\lambda$ both with the constant function on $\T^2_h$ and with the element of $\cC^n$ such that   $\lambda^m=\lambda$ is the constant function on $\T^2_h$ for any $m=0,\dots,n$);
\item[(ii)] $\|G^n(\cU) - G^n(\cV)\|_{\infty} \leq \|\cU-\cV\|_{\infty}$ for any $\cU, \cV\in \cC^n$;
\item[(iii)] For the constant $C \in \R$ in (G4),
\[
\|D_hG^n(\cU)\|_\infty \le 5C \|D_h\cU\|_\infty + C \ \text{for any } \cU \in \cC^n
\]
where $D_h\cU=\{D_hU^m\}_{m=0}^n$;
\item[(iv)]  for any $\cU\in \cC^{n+1}$
$$
\|G^{n+1}(\cU) -G^{n}(\cU)\|_{\infty}\leq  
(1-c_0^{n+2}) \sup_{m=0,\dots,n}\|U^{m+1}-U^m\|_\infty + 2 \Gamma(2-\alpha)\Delta t^{\alpha} K,
$$
where $K=\sup_{x_{i,j} \in \T^2_h, m=0,\dots,n+1}\|g(x_{i,j},[D_h U^m]_{i,j})\|_\infty$;
\item[(v)] for any $\cU\in \cC^{n}$
 $$\|G^{n}(\cU)\|_\infty \leq \|\cU\|_{\infty} + \Gamma(2-\alpha)  \Delta t^{\alpha} \sup_{x \in \T^2}\left|H(x,0)\right|.$$
 
\end{enumerate}
\end{proposition}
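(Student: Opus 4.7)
The five parts split naturally into ``direct arithmetic'' items and ``monotonicity-sandwich'' items, and the plan is to exploit the two structural features of $G^n$, namely the convex-combination in time with weights $c_m^{n+1}$ (Lemma \ref{lem:c_properties}) and the invariance of the finite differences $[D_hU]_{i,j}$ under addition of a constant or a spatial shift.

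For (i), the sum $\sum_{m=0}^n c_m^{n+1}(U^m_{i,j}+\lambda)$ equals $\sum_m c_m^{n+1}U^m_{i,j}+\lambda$ by Lemma \ref{lem:c_properties}(iv), while $[D_h(U^n+\lambda)]_{i,j}=[D_hU^n]_{i,j}$, so $S$ is unchanged. For (ii), I would set $\lambda=\|\cU-\cV\|_\infty$, use $\cV-\lambda\le\cU\le\cV+\lambda$ componentwise, apply monotonicity to get $G^n(\cV-\lambda)\le G^n(\cU)\le G^n(\cV+\lambda)$, and then rewrite the outer terms using (i) as $G^n(\cV)\pm\lambda$. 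For (v), the same sandwich works by taking the constant function $\pm\|\cU\|_\infty$; since a constant has zero discrete gradient, consistency (G2) gives $G^n(\pm\|\cU\|_\infty)_{i,j}=\pm\|\cU\|_\infty-\ra H(x_{i,j},0)$, and the claimed bound follows.

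For (iii), the key is the ``shift'' trick combined with (ii) and (G4). Define the $e_1$-shifted history $(V^m)_{i,j}=U^m_{i+1,j}$; the discrete gradient is translation-covariant, so $[D_hV^n]_{i,j}=[D_hU^n]_{i+1,j}$ and hence
\[
G^n(\cU)_{i+1,j}-G^n(\cV)_{i,j}=-\ra\bigl[g(x_{i+1,j},[D_hU^n]_{i+1,j})-g(x_{i,j},[D_hU^n]_{i+1,j})\bigr],
\]
which by (G4) is bounded by $\ra\,Ch(1+4\|D_h\cU\|_\infty)$. On the other hand (ii) gives $|G^n(\cV)_{i,j}-G^n(\cU)_{i,j}|\le\|\cV-\cU\|_\infty\le h\|D_h\cU\|_\infty$. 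Dividing by $h$ and handling $D_2^+$ symmetrically yields the stated estimate (using that the CFL/monotonicity regime forces $\ra\le 1$, so the $(1+4\ra C)\|D_h\cU\|_\infty+\ra C$ estimate collapses into $5C\|D_h\cU\|_\infty+C$ up to the constant $C$ in (G4)).

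Part (iv) is the one requiring the most careful bookkeeping and is where I expect the main difficulty. The plan is to split
\[
G^{n+1}(\cU)_{i,j}-G^n(\cU)_{i,j}=\Bigl[\sum_{m=0}^{n+1}c_m^{n+2}U^m_{i,j}-\sum_{m=0}^n c_m^{n+1}U^m_{i,j}\Bigr]-\ra\bigl[S(\cdot,U^{n+1})-S(\cdot,U^n)\bigr],
\]
and to reorganize the bracket using Lemma \ref{lem:c_properties}(iii) (the shift $c_{m+1}^{n+2}=c_m^{n+1}$ for $1\le m\le n$) together with (ii) ($c_0^{n+2}-c_0^{n+1}=-c_1^{n+2}$) into a telescoping form
\[
c_1^{n+2}(U^1_{i,j}-U^0_{i,j})+\sum_{k=1}^n c_k^{n+1}(U^{k+1}_{i,j}-U^k_{i,j}).
\]
The sum of the positive coefficients in front of the increments equals $1-c_0^{n+2}$ by Lemma \ref{lem:c_properties}(iv), so the bracket is bounded by $(1-c_0^{n+2})\sup_m\|U^{m+1}-U^m\|_\infty$. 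The $S$-term is simply bounded by $2K$ and multiplied by $\ra=\Gamma(2-\alpha)\Dt^\alpha$. The combinatorial rearrangement of the two telescopes of $c_m$'s is the step I would be most careful with, since the two sums have different upper limits and different kernels, and a bookkeeping slip there would spoil the factor $1-c_0^{n+2}$ that matches the statement.
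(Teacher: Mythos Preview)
Your proposal is correct and follows essentially the same approach as the paper in every part: (i) uses $\sum c_m^{n+1}=1$ and invariance of $[D_hU]$ under constants; (ii) uses the monotonicity $+$ (i) sandwich; (iii) uses the spatial-shift trick, splitting into a term handled by (ii) and a term handled by (G4); (iv) reorganizes the two sums via Lemma~\ref{lem:c_properties}(ii)--(iii) into $\sum_{m=0}^{n} c_{m+1}^{n+2}(U^{m+1}-U^m)$ with total weight $1-c_0^{n+2}$; and (v) compares to a constant function. The only cosmetic differences are that in (v) the paper compares to $\cV=0$ via (ii) rather than sandwiching with $\pm\|\cU\|_\infty$, and in (iii) the paper silently drops the factor $\ra$ in front of $S$ whereas you explicitly invoke $\ra\le 1$; your treatment is arguably the more careful one, and both arguments tacitly need $C\ge 1$ in (G4) for the final inequality $\|D_h\cU\|_\infty+4C\|D_h\cU\|_\infty\le 5C\|D_h\cU\|_\infty$ to hold.
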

\proof
\begin{enumerate}
\item[(i)]  By Lemma  \ref{lem:c_properties}, we have 
\[
\begin{aligned}
G^n(\cU+\lambda)_{i,j} & = \sum_{m=0}^{n} c_m^{n+1} (U^m+\lambda)_{i,j}-
\ra S(x_{i,j},h,U^{n}_{i,j}+\lambda ,[U^{n}+\lambda]_{i,j}) \\
& = \sum_{m=0}^{n} c_m^{n+1} U^m_{i,j} + \lambda - \ra S(x_{i,j},h,U^{n}_{i,j} ,[U^{n}]_{i,j}) = G^n(\cU)+\lambda.
\end{aligned}
\]


\item[(ii)]
Let $\cU, \cV \in \cC^n$ and $\lambda =\|(\cU-\cV)^+\|_\infty$. We have, in the sense of the comparison of components,
\[
U^m = V^m + (U^m-V^m) \leq V^m+ \|(U^m-V^m)^+\|_\infty = V^m+ \lambda,
\]
for $m=0,\dots,n$. By monotonicity and commutativity,
\[
G^n(\cU) \leq G^n(\cV+\lambda) = G^n(\cV) + \lambda.
\]
Hence, $G^n(\cU)-G^n(\cV) \le \|(\cU-\cV)^+\|_\infty$. Similarly, we obtain another inequality $G^n(\cU)-G^n(\cV) \ge - \|(\cU-\cV)^-\|_\infty$. These two inequalities yield the result.
\item[(iii)] Let $\tau$ be a translation operator in space, that is, $\tau_l U_{i,j} = U_{i+l_1, j+l_2}$ for $l=(l_1,l_2) \in \Z^2$ for $U \in \cC$ and defined in a similar way for $\cU\in\cC^n$. Then replacing the numerical Hamiltonian $g$ with $S$ defined in \eqref{eq:S} in the assumption (G4) and from the part (ii) above, it follows that
\[
\begin{aligned}
\|D_+^1 (G^n(\cU))_{i,j}\|_\infty  & = \frac{1}{h}\left\|(\tau_{(1,0)} G^n(\cU))_{i,j} - (G^n(\cU))_{i,j} \right\|_\infty \\
&= \frac{1}{h} \Bigg\| \sum_{m=0}^{n} c_m^{n+1} U_{i+1,j}^m - S(x_{i+1,j},h,U_{i+1,j}^n,[U^n]_{i+1,j}) \\
& \quad \ \ - \sum_{m=0}^{n} c_m^{n+1} U_{i,j}^m + S(x_{i,j},h,U_{i,j}^n,[U^n]_{i,j})\Bigg\|_\infty \\
& \le \frac{1}{h}  \Bigg\| \sum_{m=0}^{n} c_m^{n+1} U_{i+1,j}^m - S(x_{i,j},h,U_{i+1,j}^n,[U^n]_{i+1,j}) \\
& \quad \ \ - \sum_{m=0}^{n} c_m^{n+1} U_{i,j}^m + S(x_{i,j},h,U_{i,j}^n,[U^n]_{i,j})\Bigg\|_\infty \\
 & \quad \ \  + \frac{1}{h} \left\| S(x_{i,j},h,U_{i+1,j}^n,[U^n]_{i+1,j}) - S(x_{i+1,j},h,U_{i+1,j}^n,[U^n]_{i+1,j})\right\|_\infty\\
& \le \frac{1}{h}\left\| (G^n(\tau_{(1,0)} \cU))_{i,j} - (G^n(\cU))_{i,j}\right\|_\infty + C(1+4\|D_h\cU\|_\infty) \\
& \le \left\Vert\frac{\tau_{(1,0)} \cU - \cU}{h}\right\Vert_\infty + C(1+4\|D_h\cU\|_\infty)= \|D_+^1\cU\|_\infty+C(1+4\|D_h\cU\|_\infty) \\
& \le 5C\|D_h\cU\|_\infty + C
\end{aligned}
\]
since $\|D_+^1\cU\|_\infty \le \|D_h\cU\|_\infty$. We have similar estimates for the other components of $D_hG^n(\cU)$. Combining all these inequalities, we obtain the desired inequality.



\item[(iv)] Using Lemma \ref{lem:c_properties}, we have
\[
\begin{aligned}
|G^{n+1}(\cU)_{i,j} & -G^{n}(\cU)_{i,j}|
= \left|\sum_{m=0}^{n}(c_m^{n+2}-c_m^{n+1})U^m_{i,j} + c_{n+1}^{n+2} U^{n+1}_{i,j}\right.\\ 
&\left. -\ra\Big(  S(x_{i,j},h,U^{n+1}_{i,j} ,[U^{n+1}]_{i,j})-  S(x_{i,j},h,U^{n}_{i,j} ,[U^{n}]_{i,j})\Big)\right| \\
& = \left|c_{0}^{n+2} U_{i,j}^{0} - c_0^{n+1} U_{i,j}^0 + \sum_{m=0}^{n} c_{m+1}^{n+2} U_{i,j}^{m+1} - \sum_{m=1}^{n} c_{m}^{n+1} U_{i,j}^{m}\right. \\
& -\left. \ra\Big(S(x_{i,j},h,U^{n+1}_{i,j} ,[U^{n+1}]_{i,j})-  S(x_{i,j},h,U^{n}_{i,j} ,[U^{n}]_{i,j})\Big) \right|
 \\ 
 & \leq \left|\sum_{m=0}^{n}c_{m+1}^{n+2}(U_{i,j}^{m+1}-U_{i,j}^{m})\right|\\
 &+ \ra  \left|S(x_{i,j},h,U^{n+1}_{i,j} ,[U^{n+1}]_{i,j})-  S(x_{i,j},h,U^{n}_{i,j} ,[U^{n}]_{i,j}) \right|
 \\
 &\leq (1-c_0^{n+2}) \sup_{m=0,\dots,n} \|U^{m+1}-U^m\|_\infty + 2 \Gamma(2-\alpha)\Delta t^{\alpha} K.
\end{aligned}
\]
\item[(v)] By the consistency of scheme, it follows that $G^{n}(0) =-\ra H(x_{i,j},0)$. Hence, by property (ii), we have 
\[
\|G^{n}(\cU)\|_\infty \leq \|G^{n}(\cU) -G^{n}(0)\|_{\infty} + \|G^{n}(0)\|_{\infty} \leq \|\cU\|_{\infty} + \Gamma(2-\alpha) \Dt^{\alpha} \sup_{x \in \T^2}\left|H(x,0)\right|.
\]
\end{enumerate}
\endproof
\begin{proposition}\label{prop:stability}
Assume that \eqref{eq:HJ_discr} is monotone and let $\{U^n\}_{n=0}^N$ be a sequence generated by the scheme with the initial condition
\eqref{eq:IC}. Then
\begin{equation}\label{eq:upper}
\|U^{n}-U^0\|_\infty \le \frac{K\Gamma(2-\alpha)}{\alpha(1-\alpha)} (n\Delta t)^{\alpha} ,
\end{equation}
where $K =\sup_{x_{i,j} \in \T^2_h, m=0,\dots,n}\|g(x,[D_h U^m]_{i,j})\|_\infty$.
\end{proposition}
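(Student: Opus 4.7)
Starting from the scheme \eqref{eq:HJ_discr} and using $\sum_{m=0}^{n} c_m^{n+1}=1$ from Lemma~\ref{lem:c_properties}(iv), subtracting $U^0_{i,j}$ from both sides yields
\[
U^{n+1}_{i,j}-U^0_{i,j}=\sum_{m=1}^{n} c_m^{n+1}(U^m_{i,j}-U^0_{i,j})-\rho_\alpha S(x_{i,j},h,U^n_{i,j},[U^n]_{i,j}),
\]
the $m=0$ term vanishing. Because $c_m^{n+1}\ge 0$ by Lemma~\ref{lem:c_properties}(i) and $|S|\le K$ by the definition of $K$, setting $A_n:=\|U^n-U^0\|_\infty$ and taking the supremum over $(i,j)$ gives the recursion
\[
A_{n+1}\le\sum_{m=1}^{n} c_m^{n+1}\,A_m+\rho_\alpha K, \qquad A_0=0.
\]

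\textbf{Induction with an explicit ansatz.} I would then prove $A_n\le B_n := K\rho_\alpha n^\alpha/(\alpha(1-\alpha))$ by strong induction on $n$. The base cases $n=0,1$ are immediate: $A_0=0$ trivially, while $c_0^1=1$ gives $A_1\le\rho_\alpha K\le B_1$ since $\alpha(1-\alpha)\le 1/4<1$. For the inductive step, inserting the hypothesis into the recursion and dividing through by $K\rho_\alpha/(\alpha(1-\alpha))$ reduces the bound $A_{n+1}\le B_{n+1}$ to the purely algebraic inequality
\[
\sum_{m=0}^{n} c_m^{n+1}\, m^\alpha + \alpha(1-\alpha)\le (n+1)^\alpha.
\]
Applying the identity $U^{n+1}-\sum_{m=0}^n c_m^{n+1}U^m=\rho_\alpha\Da U^{n+1}$ (built into the definition of $\Da$) to the power sequence $\Phi^m:=m^\alpha$, this is equivalent to the discrete lower bound
\[
\rho_\alpha\Da\Phi^{n+1}\ge\alpha(1-\alpha).
\]

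\textbf{Main obstacle.} The last discrete inequality is the technical heart of the argument. Written out through \eqref{eq:der1} it reads
\[
\sum_{k=0}^{n}\bigl((k+1)^{1-\alpha}-k^{1-\alpha}\bigr)\bigl((n-k+1)^\alpha-(n-k)^\alpha\bigr)\ge\alpha(1-\alpha),
\]
a discrete analog of $\partial_t^\alpha t^\alpha=\Gamma(1+\alpha)$: in the continuum limit the left-hand side tends to $\Gamma(2-\alpha)\Gamma(1+\alpha)=\alpha(1-\alpha)\pi/\sin(\pi\alpha)$, which already exceeds $\alpha(1-\alpha)$ since $\sin(\pi\alpha)\le 1<\pi$. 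For a rigorous discrete proof I would lower-bound each factor by its mean-value estimate, $(k+1)^{1-\alpha}-k^{1-\alpha}\ge (1-\alpha)(k+1)^{-\alpha}$ and $(n-k+1)^\alpha-(n-k)^\alpha\ge\alpha(n-k+1)^{\alpha-1}$, which reduces the claim to $\sum_{j=1}^{n+1}j^{-\alpha}(n+2-j)^{\alpha-1}\ge 1$. The case $n=0$ gives equality, and the general $n$ follows either by verifying monotonicity in $n$ of this partial sum (a direct pairwise comparison of consecutive sums, using that the new end-term $(n+2)^{-\alpha}$ dominates the loss in the other terms) or by a lower Riemann-sum comparison with the Beta integral $B(1-\alpha,\alpha)=\pi/\sin(\pi\alpha)>1$, with the deficit near the singular endpoints $j=0$ and $j=n+2$ controlled explicitly.
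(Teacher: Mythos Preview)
Your approach is essentially identical to the paper's: the same Gronwall-type recursion $A_{n+1}\le\sum_{m} c_m^{n+1}A_m+\rho_\alpha K$, the same power-law ansatz $B_n=K\rho_\alpha n^\alpha/(\alpha(1-\alpha))$, and the same reduction of the inductive step to the algebraic inequality
\[
\sum_{m=0}^{n} c_m^{n+1}\, m^{\alpha}\le (n+1)^{\alpha}-\alpha(1-\alpha),
\]
which the paper obtains via exactly the identity $\rho_\alpha\Da\Phi^{n+1}=(n+1)^\alpha-\sum_m c_m^{n+1}m^\alpha$ you invoke.

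Where you diverge is in the ``main obstacle'', and there you make it harder than necessary. After your mean-value bounds you are left with showing $\sum_{j=1}^{n+1} j^{-\alpha}(n+2-j)^{\alpha-1}\ge 1$, and you propose either a monotonicity-in-$n$ argument or a Riemann-sum comparison with the Beta integral, both left incomplete. The paper's resolution is a one-line observation you overlooked: for $1\le j\le n+1$ one has $j\le n+1$ and $n+2-j\le n+1$, hence
\[
j^{-\alpha}(n+2-j)^{\alpha-1}\ge (n+1)^{-\alpha}(n+1)^{\alpha-1}=\frac{1}{n+1},
\]
and summing the $n+1$ terms gives the bound immediately. Equivalently, in the paper's formulation, one bounds each factor uniformly: $(n+1-m)^{1-\alpha}-(n-m)^{1-\alpha}\ge (1-\alpha)(n+1)^{-\alpha}$ and $(m+1)^\alpha-m^\alpha\ge\alpha(n+1)^{\alpha-1}$, so every term of the convolution sum is at least $\alpha(1-\alpha)/(n+1)$. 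So your proof is correct in structure but the gap you flag as the technical heart is in fact trivial once you replace the $m$-dependent bounds $j^{-\alpha}$, $(n+2-j)^{\alpha-1}$ by their uniform minima over the range.
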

\begin{proof}
For $n=1$, \eqref{eq:upper} is true since  $U_{i,j}^1=U_{i,j}^0-\ra S(x_{i,j},h,U^0_{i,j},[U^0]_{i,j})$
with $S$ defined as in \eqref{eq:S}. Arguing  by induction, assume now that \eqref{eq:upper}  is true for $j\le n$. Then using the property (iv) in Lemma \ref{lem:c_properties}, we get
\begin{equation}\label{estimate1}
\begin{split}
|U^{n+1}_{i,j}-U^{0}_{i,j}|&=\left| \sum_{m=0}^{n} c_m^{n+1} (U_{i,j}^m-U^0_{i,j})-\ra  S(x_{i,j},h,U^{n}_{i,j} ,[U^{n}]_{i,j})\right|\\
&\le \left| \sum_{m=0}^{n} c_m^{n+1} (U_{i,j}^m-U^0_{i,j})\right|+ K\Gamma(2-\alpha)\Delta t^{\alpha}\\
&\le \left( \frac{1}{\alpha(1-\alpha)}\sum_{m=0}^{n}  c_m^{n+1}m^{\alpha} +1\right) K\Gamma(2-\alpha)\Delta t^{\alpha}.
\end{split}
\end{equation}
We observe that 
\[\sum_{m=0}^{n}  c_m^{n+1}m^{\alpha}=(n+1)^{\alpha}-\sum_{m=0}^n((n+1-m)^{(1-\alpha)}-(n-m)^{(1-\alpha)})((m+1)^\alpha-m^\alpha).\]
Moreover, by the inequality $(r+1)^\beta-r^\beta\ge \beta (r+1)^{\beta-1}$ for $r \ge 0$ and $\beta\in (0,1)$ (which follows by the concavity of the function $r^\beta$), we get
\begin{align*}
&(n+1-m)^{1-\alpha}-(n-m)^{1-\alpha}\ge \frac{ 1-\alpha }{(n+1)^\alpha},\\
&(m+1)^\alpha-m^\alpha\ge \frac{ \alpha }{(n+1)^{1-\alpha}}.
\end{align*}
Hence 
\[\sum_{m=0}^{n}  c_m^{n+1}m^{\alpha}\le (n+1)^\alpha-\alpha(1-\alpha),\]
and replacing the previous inequality  in \eqref{estimate1}, we get estimate \eqref{eq:upper}.
\end{proof}
We discuss some classical examples of approximation scheme for Hamilton-Jacobi equations adapted to the fractional case. We consider the equation
\begin{equation}\label{eq:HJ_ex}
\partial_t^\alpha u (t,x) + H(Du(t,x)) = 0 \quad \mbox{for } (t,x) \in (0,T] \times \R
\end{equation}
with periodic boundary condition.\vskip 12pt
\emph{Upwind scheme}\\
Simple upwind schemes for the equation \eqref{eq:HJ_ex} are  
\begin{equation}\label{eq:upwind+}
U_{j}^{n+1} =  \sum_{m=0}^{n} c_m^{n+1} U_{j}^m  - \ra H\left(\frac{U^n_{j+1}-U^n_j}{h}\right)
\end{equation}
if $H$ is non-increasing, or 
\begin{equation}\label{eq:upwind-}
U_{j}^{n+1} =  \sum_{m=0}^{n} c_m^{n+1} U_{j}^m  - \ra H\left(\frac{U^n_{j}-U^n_{j-1}}{h}\right)
\end{equation}
if $H$ is non-decreasing.
The numerical Hamiltonian is given by $g(q_1,q_2)=H(q_1)$, in the first case, and
by $g(q_1,q_2)=H(q_2)$ in the second case. In both cases, $g$ is monotone, consistent and regular
if $H$ is locally Lipschitz. 

Now, we establish a condition under which the scheme \eqref{eq:upwind+} is monotone. By construction, the monotonicity with respect to $U^n_{j+1}$ is obvious. Also, since by Lemma  \ref{lem:c_properties} all the coefficients $c_m^{n+1}$ are positive, the map $G^n$ is increasing with respect to the variable $U_j^m$,  $m=0,\dots, n-1$. Moreover,  \eqref{eq:upwind+} is non-decreasing with respect to $U^n_j$ if $c^{n+1}_n + \frac{\ra}{h} H'\left(\frac{U^n_{j+1}-U^n_j}{h}\right)\ge 0$. Recalling that $c_n^{n+1}=2-2^{1-\alpha}$, we get the CFL condition
 \begin{equation}\label{eq:CFL_upwind}
 \frac{\Dt^\alpha}{h}\sup_{p} |H'(p)|\le \frac{2-2^{1-\alpha}}{\Gamma(2-\alpha)}
 \end{equation}
for $p\in\R$. The same condition is necessary also for \eqref{eq:upwind-}. \vskip 12pt

\emph{Lax-Friedrichs scheme}\\
The Lax-Friedrichs scheme is given by 
\begin{equation}\label{eq:LF_scheme}
U_j^{n+1} = \sum_{m=0}^{n} c_m^{n+1} U_j^m - \ra \left[ H\left(\frac{U_{j+1}^n - U_{j-1}^n}{2 h}\right)-\frac{(U_{j+1}^n + U_{j-1}^n-2 U_{j}^n) \theta}{\ra}\right],
\end{equation}
where $\theta$ has to  be chosen in order to satisfy the CFL condition.
Therefore, the numerical Hamiltonian  $g$ is
\[
g(q_1, q_2) = H\left(\frac{q_1 + q_2}{2}\right)-\frac{(q_1-q_2)\theta}{\lambda}
\]
for $\lambda = \rho_{\alpha}/h$ and $q_1, q_2 \in \R$. For the monotonicity of the scheme with respect to $U^n_j$, we need the condition
\[
c_n^{n+1}-2\theta \ge 0,
\]
and, for the monotonicity with respect to $U^n_{j\pm 1}$,
\[
\theta- \frac{\ra}{2 h} \sup_{p} |H'(p)| \ge 0.
\]
Then, recalling \eqref{eq:rho} the monotonicity of the scheme is implied by the CFL condition 
\begin{equation}\label{eq:CFL_LF}
\frac{\Gamma(2-\alpha) \Delta t^{\alpha}}{2h} \sup_{p} |H'(p)| \le \theta \le 1-2^{-\alpha}
\end{equation}
for $p\in\R$.
\begin{remark}
	The CFL conditions \eqref{eq:CFL_upwind} and \eqref{eq:CFL_LF} reduce to the classical ones for $\alpha=1$. In general, they become more and more restrictive for $\alpha$ decreasing to $0^+$.
	This phenomenum has been also observed in \cite{lmz} in the study of approximation schemes for
	time-fractional conservation laws. 
\end{remark}
\section{A convergence result for  the finite difference scheme}\label{s4}
In this section, we study the convergence of the scheme \eqref{eq:HJ_discr} following the classical stability argument in \cite{bs}, where it is proved that a \emph{monotone}, \emph{stable} and \emph{consistent} approximation scheme converges to the unique solution of the continuous Hamilton-Jacobi equations.\par
 We recall the definition of the relaxed limit for a  locally bounded
sequence $\{u_\sigma\}_{\sigma>0}$. The upper relaxed limit is given by
\[ (\limsup_{\sigma\to 0^+}{}^* u_\sigma)(t,x)=\lim_{\delta\to 0} \sup\left\lbrace 
u_\sigma(s,y):\,(s,y)\in Q_T\cap\overline{B_\delta(t,x)},\, 0<\sigma<\delta  \right\rbrace, \]
while the lower relaxed limit by 
$\liminf^*_{\sigma\to 0^+} u_\sigma=-\limsup^*_{\sigma\to 0^+}(-u_\sigma).$\\
We consider a sequence of approximation steps $(\Dt,h(\Dt))$ such that  $h(\Dt)\to 0$ for $\Dt\to 0$ and  we denote with $u_{\Delta t}$ the piecewise constant extension to $\overline{Q_T}$  of the solution of the approximation scheme \eqref{eq:HJ_discr} corresponding to the previous parameters, i.e.
$u_\Dt(t,x)=U^n_{i,j}$ if $n=[t/\Dt]$ (where $[\cdot]$ denotes the integer part) and $(i,j)$ such that $x\in ((i-1/2)h, (i+1/2)h]\times ((j-1/2)h, (j+1/2)h]$.

\begin{theorem}\label{thm:conv}
Assume that   $g$ satisfies 
(G1)-(G4), $u_0$ is Lipschitz continuous and, for $\Dt$ sufficiently small and $h=h(\Dt)$, the scheme \eqref{eq:HJ_discr} is monotone.  As $\Dt\to 0^+$, the sequence $\{u_{\Delta t}\}_{{\Delta t}>0}$ converges locally uniformly to the unique   viscosity solution $u$ of \eqref{eq:HJ_caputo}, \eqref{IC}.
\end{theorem}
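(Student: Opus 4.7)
\emph{Proof proposal.} I would follow the half-relaxed limits framework of Barles--Souganidis~\cite{bs}, adapted to the fractional setting. The three ingredients to verify are: (a) $L^\infty$-stability of the family $\{u_{\Dt}\}_{\Dt>0}$, (b) monotonicity of the scheme (assumed in the statement through the hypothesis on $h=h(\Dt)$), and (c) consistency with \eqref{eq:HJ_caputo} on smooth test functions. Granted these, I define the half-relaxed limits
\[
\overline u(t,x):=\limsup_{\Dt\to 0^+}{}^*\, u_{\Dt}(t,x),\qquad \underline u(t,x):=\liminf_{\Dt\to 0^+}{}_*\, u_{\Dt}(t,x),
\]
and show that $\overline u$ and $\underline u$ are a viscosity sub- and supersolution of \eqref{eq:HJ_caputo} respectively, satisfying $\overline u(0,\cdot)\le u_0\le \underline u(0,\cdot)$. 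The comparison principle (Theorem \ref{thm:comp}) then forces $\overline u\le \underline u$, while $\underline u\le\overline u$ by construction, so $u:=\overline u=\underline u$ is continuous and coincides with the unique viscosity solution of \eqref{eq:HJ_caputo}--\eqref{IC}; a standard Dini-type argument upgrades pointwise equality to locally uniform convergence on $\overline{Q_T}$.

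\emph{Stability and initial trace.} Since $u_0$ is Lipschitz, I would first propagate a uniform spatial Lipschitz bound for $U^n$ by translating the scheme, using the monotonicity of $G^n$ (Proposition \ref{prop:G_properties}(ii)) and the $x$-regularity (G4) of $g$ in a discrete Gronwall-type argument. This turns the constant $K$ appearing in Proposition \ref{prop:stability} into an $O(1)$ bound, and \eqref{eq:upper} then gives $\|u_{\Dt}\|_\infty \le \|u_0\|_\infty + CT^\alpha$ uniformly in $\Dt$, establishing (a). The same estimate implies $|u_{\Dt}(t,x)-u_0(x)|\to 0$ as $(\Dt,t)\to(0,0^+)$, so $\overline u(0,\cdot)=\underline u(0,\cdot)=u_0$.

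\emph{Passing to the limit.} To prove that $\overline u$ is a subsolution, take $\varphi\in C^{1,1}((0,T]\times\T^d)\cap C([0,T]\times\T^d)$ and $(\hat t,\hat x)\in(0,T]\times\T^d$ at which $\overline u-\varphi$ has a strict global maximum. A standard perturbation argument provides a subsequence $\Dt_k\to 0$ and grid indices $(n_k+1,i_k,j_k)$ with $(t_{n_k+1},x_{i_k,j_k})\to(\hat t,\hat x)$ at which $u_{\Dt_k}-\varphi$ attains a local maximum, with value $\xi_k:=u_{\Dt_k}(t_{n_k+1},x_{i_k,j_k})-\varphi(t_{n_k+1},x_{i_k,j_k})\to 0$. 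The positivity of the weights $c_m^{n+1}$ and the identity $\sum_m c_m^{n+1}=1$ from Lemma \ref{lem:c_properties}, combined with $U^m_{i,j}\le \varphi(t_m,x_{i,j})+\xi_k$, yield the key discrete inequality
\[
\Da U^{n_k+1}_{i_k,j_k}\ \ge\ \Da\varphi(t_{n_k+1},x_{i_k,j_k}),
\]
in which the $\xi_k$ contribution cancels exactly. Together with (G1) and the local maximality at time $t_{n_k}$, this lets me control the discrete Hamiltonian term $S(x_{i_k,j_k},h_k,U^{n_k}_{i_k,j_k},[U^{n_k}]_{i_k,j_k})$ from below by $S$ evaluated on $\varphi(t_{n_k},\cdot)$ up to an $o(1)$ error. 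Substituting into the scheme \eqref{eq:HJ_discr} and passing to the limit $k\to\infty$ using the truncation-error estimate $\Da\varphi\to\partial_t^\alpha\varphi$ from the remark after \eqref{approx_Caputo} together with the consistency (G2), I obtain $\partial_t^\alpha\varphi(\hat t,\hat x)+H(\hat x,D\varphi(\hat t,\hat x))\le 0$. The supersolution inequality for $\underline u$ follows from the mirror argument with minima.

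\emph{Main obstacle.} The most delicate step is the consistency passage for the nonlocal-in-time operator $\Da$: since $\Da U^{n+1}$ depends on the entire past $U^0,\ldots,U^{n+1}$, the classical local Taylor expansion of Barles--Souganidis does not apply directly, and the sign and normalisation properties $c_m^{n+1}>0$, $\sum_m c_m^{n+1}=1$ of Lemma \ref{lem:c_properties} are precisely what allows the pointwise inequality $u_{\Dt_k}\le\varphi+\xi_k$ to transfer to the discrete Caputo derivative without error accumulation over the $N=T/\Dt_k$ time steps. A secondary technical hurdle is securing the uniform spatial Lipschitz bound needed for stability, since Proposition \ref{prop:G_properties}(iii) alone is non-contractive and must be upgraded by a translation/comparison argument exploiting monotonicity in order to rule out an exponential blow-up of the Lipschitz constant in the number of time iterations.
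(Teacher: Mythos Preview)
Your proposal follows essentially the same Barles--Souganidis route as the paper, including the use of $c_m^{n+1}>0$ and $\sum_m c_m^{n+1}=1$ to transfer the global inequality $u_{\Dt}\le\varphi+\xi$ into the discrete Caputo operator. The paper packages this step through the monotone map $G^n$ and Proposition~\ref{prop:G_properties}(i) rather than splitting $\Da$ and $S$, but the underlying mechanism is the same.

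There is, however, one substantive gap. For the consistency step you invoke ``the truncation-error estimate $\Da\varphi\to\partial_t^\alpha\varphi$ from the remark after \eqref{approx_Caputo}'', i.e.\ the Lin--Xu bound $r_{\Dt}^{n+1}\le c_u\Dt^{2-\alpha}$. That constant $c_u$ depends on the \emph{second} time derivative of the function, whereas test functions in the viscosity definition are only $C^{1,1}$ in time; moreover you need the convergence along a \emph{moving} sequence $(t_{\Dt},x_{\Dt})\to(\hat t,\hat x)$, not just at a fixed node. The paper closes exactly this gap with its Lemma~\ref{lem:preliminary}: the interval $[0,t]$ is split at a small scale $\eta$, integration by parts together with $\varphi'\in L^1$ handles the near-$0$ piece, and only the modulus of continuity of $\varphi'$ on $[\eta,t]$ is used on the remainder. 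This lemma is the main technical input of Section~\ref{s4} and is precisely the missing justification for what you correctly identify as the ``main obstacle''.

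Two smaller remarks. First, your separation into $\Da U\ge\Da\varphi$ and a separate inequality $S(U^{n_k})\ge S(\varphi(t_{n_k},\cdot))$ is delicate: $S$ alone is not monotone in the center value $U^{n_k}_{i_k,j_k}$ (only the combination $c_{n_k}^{n_k+1}U^{n_k}_{i_k,j_k}-\rho_\alpha S$ is, under CFL), which is why the paper compares at the level of the full operator $G^{n_k}$; your argument can be repaired, but the split as written does not go through verbatim. Second, on stability the paper is itself terse (it simply cites Proposition~\ref{prop:stability} with $K$ controlled via the Lipschitz constant of $u_0$), so your explicit acknowledgement of the Lipschitz-propagation issue is no less careful than the published proof.
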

We need a preliminary result about the convergence of the fractional derivative for a test function.
\begin{lemma}\label{lem:preliminary}
Let $\varphi\in C^{1,1}((0,T]\times \T^d)\cap C([0,T]\times \T^d)$ be a test function and let  $ (t_\Dt,x_\Dt)\to (t,x)\in (0,T)\times \T^d$  for $\Dt\to 0$. Then, defined for $n_\Dt+1=[t_\Dt/\Dt]$ the discrete fractional derivative
\begin{equation}\label{sp0}
D^\alpha_\Dt \varphi ((n_\Dt+1)\Dt,x_\Dt)=\frac{1}{\rho_{\alpha}}\left( \varphi((n_\Dt+1)\Dt , x_\Dt ) - \sum_{m=0}^{n_\Dt} c_m^{n_\Dt+1} \varphi(m \Dt , x_\Dt )\right),
\end{equation}
we have
\[ \lim_{{\Delta t}\to 0} 
 D^\alpha_\Dt \varphi ((n_\Dt+1)\Dt,x_\Dt) =\partial_t^\alpha \varphi (t,x) . \]
\end{lemma}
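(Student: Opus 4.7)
The plan is to match the discrete operator term-by-term with the integration-by-parts decomposition \eqref{eq:caputo_bis}, which reads $\partial_t^\alpha\varphi(t,x)=J[\varphi](t,x)+K_{(0,t)}[\varphi](t,x)$. Using Lemma \ref{lem:c_properties}(iv) ($\sum_{m=0}^{n_\Dt} c_m^{n_\Dt+1}=1$), I would first rewrite \eqref{sp0} as
\[D^\alpha_\Dt\varphi(t_{n_\Dt+1},x_\Dt)=\frac{1}{\rho_\alpha}\sum_{m=0}^{n_\Dt} c_m^{n_\Dt+1}\bigl(\varphi(t_{n_\Dt+1},x_\Dt)-\varphi(t_m,x_\Dt)\bigr)=A_\Dt+B_\Dt,\]
where $A_\Dt$ isolates the $m=0$ term and $B_\Dt$ is the remaining sum over $m\ge 1$, and then show separately that $A_\Dt\to J[\varphi](t,x)$ and $B_\Dt\to K_{(0,t)}[\varphi](t,x)$.

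For $A_\Dt$, I use $\rho_\alpha=(1-\alpha)\Gamma(1-\alpha)\Dt^\alpha$ and the mean value theorem applied to $s\mapsto s^{1-\alpha}$ to obtain $c_0^{n_\Dt+1}=(n_\Dt+1)^{1-\alpha}-n_\Dt^{1-\alpha}=(1-\alpha)\xi_\Dt^{-\alpha}$ with $\xi_\Dt\in(n_\Dt,n_\Dt+1)$. Since $\xi_\Dt\Dt\to t$, this gives $c_0^{n_\Dt+1}/\rho_\alpha\to 1/(\Gamma(1-\alpha)\,t^\alpha)$, and combined with $\varphi(t_{n_\Dt+1},x_\Dt)-\varphi(0,x_\Dt)\to\varphi(t,x)-\varphi(0,x)$ (from continuity of $\varphi$ and $x_\Dt\to x$), this yields $A_\Dt\to J[\varphi](t,x)$.

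For $B_\Dt$, I substitute $k=n_\Dt+1-m$, set $\tau_k:=k\Dt=t_{n_\Dt+1}-t_m$, and apply Taylor's theorem to $s\mapsto s^{1-\alpha}$ to write
\[c_m^{n_\Dt+1}=-\bigl[(k+1)^{1-\alpha}-2k^{1-\alpha}+(k-1)^{1-\alpha}\bigr]=\alpha(1-\alpha)\,\eta_k^{-\alpha-1},\quad \eta_k\in(k-1,k+1),\]
so that
\[B_\Dt=\frac{\alpha}{\Gamma(1-\alpha)}\sum_{k=1}^{n_\Dt}\frac{\Dt}{(\eta_k\Dt)^{\alpha+1}}\bigl[\varphi(t_{n_\Dt+1},x_\Dt)-\varphi(t_{n_\Dt+1}-\tau_k,x_\Dt)\bigr].\]
This is a Riemann-type sum for the integral defining $K_{(0,t)}[\varphi](t,x)$. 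For $k\ge 2$ one has $\eta_k\ge k-1\ge k/2$, so $(\eta_k\Dt)^{-\alpha-1}$ is comparable to $\tau_k^{-\alpha-1}$, and the piecewise-constant extension of the summand converges pointwise to $\tau^{-\alpha-1}(\varphi(t,x)-\varphi(t-\tau,x))$; boundedness of $\partial_t\varphi$ on $[t/2,3t/2]\times\T^d$ for small $\Dt$ controls the summand by $C\tau^{-\alpha}$, integrable on $(0,t)$, so Lebesgue dominated convergence supplies the limit. The remaining $k=1$ term, for which $c_{n_\Dt}^{n_\Dt+1}=2-2^{1-\alpha}$ is bounded but $\eta_1\in(0,2)$ could in principle be small, is treated by direct estimation: the same Lipschitz bound makes it $O(\Dt^{1-\alpha})$, hence negligible. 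Summing the limits of $A_\Dt$ and $B_\Dt$ yields the claim via \eqref{eq:caputo_bis}.

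The main obstacle is precisely the control of the Riemann-type sum near the singularity $\tau=0^+$: the $C^{1,1}$ time regularity of $\varphi$ must be used there to cancel the $\tau^{-\alpha-1}$ blow-up of the kernel, both for constructing the Lebesgue-dominating function and for handling the isolated $k=1$ outlier whose coefficient $\eta_k$ is not comparable to $k$. The opposite endpoint $\tau\to t^-$, where $\varphi$ is only continuous up to $s=0$, is harmless because the kernel $\tau^{-\alpha-1}$ remains bounded there.
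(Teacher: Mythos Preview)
Your proposal is correct and follows a genuinely different route from the paper. The paper does \emph{not} use the $J+K$ decomposition \eqref{eq:caputo_bis}; instead it returns to the integral form \eqref{eq:der1} of the discrete operator, writes the error $D^\alpha_\Dt\varphi-\partial_t^\alpha\varphi$ as a sum of integrals over $[t_j,t_{j+1}]$, and splits that sum at a fixed level $\eta\in(0,t/2)$: on $[0,\eta]$ it integrates by parts and uses only $\varphi'\in L^1$ together with the bound $1/(t-s)\le 1/(t-\eta)$, while on $[\eta,t]$ it uses the modulus of continuity of $\varphi'$ (available since $\varphi\in C^{1,1}$ away from $s=0$). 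By contrast, you match the discrete operator term-by-term to $J+K$: the $m=0$ term to $J$ via the mean-value theorem, and the rest to $K$ via the second-difference representation $c_m^{n+1}=\alpha(1-\alpha)\eta_k^{-\alpha-1}$ and dominated convergence for the resulting Riemann sum.

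Your approach is more structural---it exploits exactly the decomposition \eqref{eq:caputo_bis} that underlies the viscosity theory---and avoids the paper's explicit $\eta$-splitting and integration-by-parts computation. The paper's approach is slightly more elementary in that it needs no dominated-convergence argument, only direct estimates. One point to tighten in your write-up: the single dominating function $C\tau^{-\alpha}$ cannot come solely from boundedness of $\partial_t\varphi$ on $[t/2,3t/2]$, since for $\tau$ close to $t$ the Lipschitz estimate in time is unavailable (you are evaluating $\varphi$ near $s=0$). Your last sentence correctly identifies this, but it should be folded into the domination itself---use the Lipschitz bound for $\tau\le t/2$ and the trivial bound $2\|\varphi\|_\infty\cdot(t/2)^{-\alpha-1}$ for $\tau>t/2$. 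Also, the Taylor step $c_m^{n+1}=\alpha(1-\alpha)\eta_k^{-\alpha-1}$ requires $k\ge 2$ (at $k=1$ the function $s^{1-\alpha}$ is not $C^2$ on $[0,2]$), which is another reason the $k=1$ term must be handled separately, as you do.
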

\begin{proof}
	Convergence properties of the discrete   fractional derivative to the continuous one are already studied in \cite{jlz,lx}. Here we give a different proof of the  convergence result for  reader's convenience.
	To simplify the notation, since in this argument only the time variable is involved, we omit the dependence of $\varphi$ on $x$. Because of the continuity of the Caputo derivative of $\varphi$ with respect to $t$ (see \cite[Prop. 2.1]{n}), it is sufficient  to prove that
	\[ \lim_{{\Delta t}\to 0} 
	\left( D^\alpha_\Dt \varphi ((n_\Dt+1)\Dt)-\partial_t^\alpha \varphi (t_{{\Delta t}})\right) =0. \]
	Moreover, for a test function $\varphi$, the Caputo derivative can be defined in the standard way, see
	\eqref{eq:caputo_der}. In the rest of the proof, we omit the index ${\Delta t}$ and we write $t$, $n$ and in place of $t_{\Delta t}$, $n_{\Delta t}$. Fix $\eta>0$ such that $t>2\eta$ and let ${\bar n}<n$ be the greatest integer such that ${\bar n} \Dt\leq \eta$. Then we write
	\begin{equation} \label{disc_part_der}
	\begin{aligned}
	&D^\alpha_\Dt \varphi (t+\Delta t)-\partial_t^\alpha \varphi (t) = \frac{1}{\Gamma(1-\alpha)} \sum_{j=0}^n \int_{t_j}^{t_{j+1}} \Big(\frac{\varphi(t_{j+1})-\varphi(t_j)}{\Delta t (t-s)^{\alpha}}- \frac{\varphi'(s)}{(t-s)^{\alpha}} \Big) ds \\
	& = \frac{1}{\Gamma(1-\alpha)} \sum_{j=0}^{\bar n-1}\left( 
	\int_{t_j}^{t_{j+1}} \frac{\varphi(t_{j+1})-\varphi(t_j)}{\Delta t (t-s)^{\alpha}}ds- \int_{t_j}^{t_{j+1}} \frac{\varphi'(s)}{(t-s)^{\alpha}} ds\right)  \\
	& + \frac{1}{\Gamma(1-\alpha)} \sum_{j=\bar n}^{n}\left( 
	\int_{t_j}^{t_{j+1}} \frac{\varphi(t_{j+1})-\varphi(t_j)}{\Delta t (t-s)^{\alpha}}ds- \int_{t_j}^{t_{j+1}} \frac{\varphi'(s)}{(t-s)^{\alpha}} ds\right) .
	\end{aligned}
	\end{equation}
	We estimate the two sums (multiplied by $\Gamma(1-\alpha)$) in \eqref{disc_part_der} separately. For $0 \leq j \leq \bar n-1$, use the integration by parts to get
	\begin{align*}
	\int_{t_j}^{t_{j+1}} \frac{\varphi'(s)}{(t-s)^{\alpha}}ds & = \Big[\frac{\varphi(t_{j+1})}{(t-t_{j+1})^\alpha}-\frac{\varphi(t_{j})}{(t-t_{j})^\alpha}\Big] -\alpha\int_{t_j}^{t_{j+1}} \frac{\varphi(s)}{(t-s)^{\alpha+1}} ds \\
	& = \frac{\varphi(t_{j+1})-\varphi(t_j)}{(t-t_j)^{\alpha}} + \alpha\int_{t_j}^{t_{j+1}} \frac{\varphi(t_{j+1})-\varphi(s)}{(t-s)^{\alpha+1}} ds.
	\end{align*}
	Hence,
	\begin{equation}\label{first_term}
	\begin{aligned}
	& \int_{t_j}^{t_{j+1}} \frac{\varphi(t_{j+1})-\varphi(t_j)}{\Delta t (t-s)^{\alpha}} ds -\int_{t_j}^{t_{j+1}} \frac{\varphi'(s)}{(t-s)^{\alpha}} ds \\
	& = \frac{\varphi(t_{j+1})-\varphi(t_j)}{\Delta t} \int_{t_j}^{t_{j+1}} \Big[\frac{1}{(t-s)^{\alpha}}-\frac{1}{(t-t_j)^{\alpha}}\Big] ds - \alpha\int_{t_j}^{t_{j+1}} \frac{\varphi(t_{j+1})-\varphi(s)}{(t-s)^{\alpha+1}} ds.
	\end{aligned}
	\end{equation}
	Observe that $\frac{1}{t-s} \leq \frac{1}{t-t_{j+1}} \leq \frac{1}{t-\eta}$ for $t_j \leq s \leq t_{j+1}$.
	For the first term of \eqref{first_term}, using the inequality $t^{\alpha}-s^{\alpha} \leq (t-s)^{\alpha}$ for $0 \leq s \leq t$, we have the following estimate:	
	\begin{align*}
	&\frac{\varphi(t_{j+1})-\varphi(t_j)}{\Delta t} \int_{t_j}^{t_{j+1}} \Big[\frac{1}{(t-s)^{\alpha}}-\frac{1}{(t-t_j)^{\alpha}}\Big] ds \\
	& \leq \frac{\int_{t_{j}}^{t_{j+1}} |\varphi'(s)|ds} {\Delta t} \int_{t_{j}}^{t_{j+1}} \frac{(t-t_j)^{\alpha}-(t-s)^{\alpha}}{(t-s)^{\alpha}(t-t_j)^{\alpha}}ds \\
	& \leq  \frac{\int_{t_{j}}^{t_{j+1}} |\varphi'(s)|ds} {\Delta t} \int_{t_{j}}^{t_{j+1}} \frac{(s-t_j)^{\alpha}}{(t-s)^{\alpha}(t-t_j)^{\alpha}}ds \\
	& \leq  \frac{\int_{t_{j}}^{t_{j+1}} |\varphi'(s)|ds} {\cancel{\Delta t}} \frac{(\Delta t)^\alpha}{(t-t_{j})^\alpha (t-t_{j+1})^{\alpha}} \cancel{\Delta t} \\
	&\leq \frac{1}{(t-\eta)^{2\alpha}} (\Delta t)^\alpha \int_{t_{j}}^{t_{j+1}} |\varphi'(s)|ds.
	\end{align*}	
	For the second term of \eqref{first_term}, we have the following estimate:
	\begin{align*}
	& \alpha\int_{t_j}^{t_{j+1}} \frac{\varphi(t_{j+1})-\varphi(s)}{(t-s)^{\alpha+1}} ds \leq \alpha\frac{\omega_{\varphi}(\Delta t)}{(t-t_{j+1})^{\alpha+1}} \Delta t \leq \alpha\frac{\omega_{\varphi}(\Delta t)}{(t-\eta)^{\alpha+1}} \Delta t,
	\end{align*}
	where $\omega_{\varphi}$ is a modulus of continuity of $\varphi$.
	Thus,
	\begin{align*}
	&\Big\vert \frac{1}{\Gamma(1-\alpha)} \sum_{j=0}^{\bar n-1}
	\int_{t_j}^{t_{j+1}} \frac{\varphi(t_{j+1})-\varphi(t_j)}{\Delta t (t-s)^{\alpha}}ds - \int_{t_j}^{t_{j+1}} \frac{\varphi'(s)}{(t-s)^{\alpha}} ds \Big\vert \\
	& \leq \frac{1}{\Gamma(1-\alpha)} \sum_{j=0}^{\bar n-1}  \frac{1}{(t-\eta)^{2\alpha}} (\Delta t)^\alpha \int_{t_{j}}^{t_{j+1}} |\varphi'(s)|ds+\alpha\frac{\omega_{\varphi}(\Delta t)}{(t-\eta)^{\alpha+1}} \Delta t \\
	& \leq \frac{1}{\Gamma(1-\alpha)} \frac{1}{(t-\eta)^{2\alpha}} (\Delta t)^\alpha \int_{0}^{\eta} |\varphi'(s)|ds + \frac{\alpha}{\Gamma(1-\alpha)}\frac{\omega_{\varphi}(\Delta t)}{(t-\eta)^{\alpha+1}} \eta.
	\end{align*}
	Clearly, both terms converge to $0$ as $\Delta t \to 0$.\\
	Now, we estimate the second sum in \eqref{disc_part_der}. We have
		\begin{align*}
	&\Big \vert	\int_{t_j}^{t_{j+1}} \frac{\varphi(t_{j+1})-\varphi(t_j)}{\Delta t (t-s)^{\alpha}}ds- \int_{t_j}^{t_{j+1}} \frac{\varphi'(s)}{(t-s)^{\alpha}} ds\Big \vert\\
		& =\Big \vert \int_{t_j}^{t_{j+1}} \frac{\varphi(t_{j+1})-\varphi(t_j)-\Delta t \varphi'(s)}{\Delta t (t-s)^{\alpha}}ds \Big \vert 
		= \Big \vert \int_{t_j}^{t_{j+1}} \frac{\int_{t_j}^{t_{j+1}}\varphi'(\tau)d\tau-\Delta t \varphi'(s)}{\Delta t (t-s)^{\alpha}}ds \Big \vert \\
		=&\Big \vert \int_{t_j}^{t_{j+1}} \frac{\int_{t_j}^{t_{j+1}}[\varphi'(\tau)-\varphi'(s)]d\tau}{\Delta t (t-s)^{\alpha}}ds \Big \vert \le\omega_{\varphi'}(\Delta t)\int_{t_j}^{t_{j+1}}\frac{1}{(t-s)^{\alpha}}ds,
		\end{align*}
		where $\omega_{\varphi'}$ is the modulus of continuity of $\varphi'$ on $[\eta,t]$. Summarizing these estimates, we conclude
		\begin{align*}
		\Big \vert  \sum_{j=\bar n}^{n}\left(
		\int_{t_j}^{t_{j+1}} \frac{\varphi(t_{j+1})-\varphi(t_j)}{\Delta t (t-s)^{\alpha}}ds- \int_{t_j}^{t_{j+1}} \frac{\varphi'(s)}{(t-s)^{\alpha}} ds\right) \Big \vert 
		\le \omega_{\varphi'}(\Delta t)\int_{t_{\bar n}}^{t_{n+1}}\frac{1}{(t-s)^{\alpha}}ds\to 0,
		\end{align*} 
		as $\Delta t \to 0$ .
\end{proof}

\begin{proof}[Proof of Theorem \ref{thm:conv}]
In order to apply the Barles-Souganidis' convergence result (see \cite{bs}), we define
for $(t,x)\in {\overline {Q_T}}$ 
	\begin{align*}
	&\overline{u}(t,x)=(\limsup_{{\Delta t}\to 0^+}{}^* u_{\Delta t})(t,x),\\
	&\underline{u}(t,x)=(\liminf_{{\Delta t}\to 0^+}{}^* u_{\Delta t})(t,x).
	\end{align*}
Note that, by definition,  $\unu(t,x)\le \ovu(t,x)$. 
We claim that $\ovu$, $\unu$ are, respectively, a viscosity subsolution and a viscosity supersolution of \eqref{eq:HJ_caputo} such that $\ovu(0,x)\le\unu(0,x)$ for $x\in\T^2$. If the claim holds, then
from Theorem \ref{thm:comp} it follows that $\ovu(t,x)\le \unu(t,x)$ and therefore
$u=\ovu\equiv\unu$  is the unique viscosity solution of \eqref{eq:HJ_caputo} in $Q_T$. Moreover, the
definition of $\ovu$, $\unu$ implies the uniform convergence of $\{u_{\Delta t}\}_{{\Delta t}>0}$ to $u$.\\
To prove the claim, we first observe that \eqref{eq:upper} and the continuity of $u_0$ implies that 
$\unu=\ovu=u_0(x)$ for $x\in\T^2$. Clearly, by \eqref{eq:IC}  we have $\unu\le u_0 \le \ovu$. Moreover, if  $(s_{\Delta t},y_{\Delta t})\to (0,x)$ for $\Dt\to 0$, then define
$n_{\Delta t}=[s_{\Delta t}/\Dt]$ and let $i_{\Delta t},j_{\Delta t}$ be such that $y_{\Delta t}\in ((i_{\Delta t}-1/2)h, (i_{\Delta t}+1/2)h]\times ((j_{\Delta t}-1/2)h, (j_{\Delta t}+1/2)h]$. We have
\begin{align*}
u_{\Delta t}(s_{\Delta t},y_{\Delta t})&=U^{n_{\Delta t}}_{i_{\Delta t},j_{\Delta t}}\le U^0_{i_{\Delta t},j_{\Delta t}}+2K \Gamma(2-\alpha)(n_{\Delta t}\Dt)^\alpha\\
& \le u_0(x)+L_0|x-(i_{\Delta t}h ,j_{\Delta t}h )|+2K\Gamma(2-\alpha)(n_{\Delta t}\Dt)^\alpha,
\end{align*}
where $L_0$ is the Lipschitz constant  of $u_0$ and $K=\sup\{|g(x,q)|: x\in\T^2,\, |q|\le L_0\}$.
Passing to the limit in the previous inequality  for ${\Delta t}\to 0^+$, we get $\limsup_{\Delta t\to 0} u_{\Delta t}(s_{\Delta t},y_{\Delta t})\le u_0(x)$ which implies, for the arbitrariness of the sequence $(s_{\Delta t},y_{\Delta t})$,
$\ovu(0,x)\le u_0(x)$. We prove similarly that $\unu(0,x)\ge u_0(x)$. \\
The stability of the scheme \eqref{eq:HJ_expl}, i.e.   the boundedness of the sequence $\{u_{\Delta t}\}_{{\Delta t}>0}$   bounded uniformly in ${\Delta t}$, is clearly implied by     Prop. \ref{prop:stability}.\\
To prove the consistency of the scheme, we  claim  that, given a test function $\varphi$ and a sequence $(t_{{\Delta t}},x_{\Delta t})=(n_{\Delta t}\Dt, (i_{\Delta t} h, j_{\Delta t} h))$ converging to $(t,x)\in Q_T$ for ${\Delta t}\to 0$, then we have
\begin{equation}\label{eq:consistency} 
\begin{split}
\lim_{{\Delta t}\to 0}& 
D^\alpha_\Dt \varphi (t_{\Delta t}+\Dt,x_{\Delta t})+  S\left(x_{\Delta t},h,\varphi(t_{{\Delta t}}, x_{\Delta t}) ,[\varphi(t_{{\Delta t}},\cdot) ]_{(i_{\Delta t},j_{\Delta t})}\right)\\
&=\partial_t^\alpha \varphi (t,x) + H(x,D\varphi(t,x)),
\end{split}
\end{equation}
where $D^\alpha_\Dt \varphi (t_{\Delta t}+\Delta t,x_{\Delta t})$ is defined as in \eqref{sp0}.
The previous claim follows by Lemma \ref{lem:preliminary}   and by the  equality
\[ \lim_{{\Delta t}\to 0} S\left(x_{\Delta t},h,\varphi(t_{{\Delta t}}, x_{\Delta t}) ,[\varphi(t_{\Delta t},\cdot) ]_{i_{\Delta t},j_{\Delta t}}\right)=H(x,D\varphi(t,x)),\]
which is consequence of the      assumptions (G2)-(G3) for the numerical Hamiltonian $g$.
Hence  the claim \eqref{eq:consistency} holds and we conclude that  the scheme \eqref{eq:HJ_expl}, besides   monotone, it is also stable and  consistent with the continuous equation \eqref{eq:HJ_caputo}. \\
We  prove that, by the monotonicity, stability and consistency properties of the  scheme, it follows that
$\ovu$, $\unu$ are, respectively, a viscosity subsolution and a viscosity supersolution of \eqref{eq:HJ_caputo}. We only show that $\ovu$ is a subsolution, since the argument for $\unu$ is similar. First observe that the stability of the scheme implies that the sequence $\{u_\Dt\}_{\Dt>0}$ is bounded and therefore $\ovu$ is well defined. Consider a test function $\varphi$ such that $\ovu-\varphi$ takes a strict maximum point at $(\hat t,\hat x)\in \T^d\times(0,T)$. By Lemma V.I.6 in \cite{bcd}, there exists a sequence $\{(t_{{\Delta t}}, x_{{\Delta t}})\}_{{\Delta t}>0}$ such that $u_{{\Delta t}}-\varphi$ has a maximum point at 
$(t_{{\Delta t}}, x_{{\Delta t}})$, moreover  $(t_{{\Delta t}}, x_{{\Delta t}})\to (\hat t,\hat x)
$ and $u_{\Delta t}(t_{\Delta t},x_{\Delta t})\to \ovu(\hat t, \hat x)$ for ${\Delta t}\to 0$.
Set $\delta_{\Delta t}=u_{\Delta t}(t_{\Delta t}x_{\Delta t})-\varphi(t_{\Delta t},x_{\Delta t})$ and observe that
\begin{equation}\label{sp_1}
u_{\Delta t}(t,x)\le \varphi(x,t)+\delta_{\Delta t}\qquad \forall (x,t)\in\T^d\times(0,T).
\end{equation}
Let $n_{\Delta t}$ be such that $n_{\Delta t}+1=[t_{\Delta t}/{\Delta t}]$
and  $i_\Dt,j_\Dt$ such that $x_\Dt\in   ((i_{\Delta t}-1/2)h, (i_{\Delta t}+1/2)h]\times ((j_{\Delta t}-1/2)h, (j_{\Delta t}+1/2)h]$ . Define  $\Phi^m_{i,j}=\varphi(m\Dt, x_{i,j} )$, for $i,j=1,\dots,1/h$, $m=0,\dots,[T/\Dt]$ and let $\omega_\varphi$ be a modulus of continuity for $\varphi$ . By \eqref{sp_1}, we have 
\begin{equation}\label{sp_2}
U^m_{i,j}\le \Phi^m_{i,j}+\delta_{\Delta t},   
\end{equation}
for $m=0, \dots, [T/\Dt]$, $i,j=0,\dots,h$.
Hence,   recalling that $u_{\Delta t}$ is the piecewise constant interpolation of  the solution of   \eqref{eq:HJ_discr} and the scheme can be rewritten in the equivalent form \eqref{eq:HJ_expl}, by \eqref{sp_2}, Prop. \ref{prop:G_properties}.(i) and the monotonicity of the scheme, we have 
\begin{align*}
\Phi^{n_\Dt+1}_{i_\Dt,j_\Dt} &=\varphi((n_\Dt+1)\Dt,(i_{\Delta t},j_{\Delta t}))\le \varphi(t_\Dt,x_\Dt)+\omega (\Dt+h)
\\
&=u_\Dt(t_\Dt,x_\Dt)-\delta_\Dt+\omega (\Dt+h) 
=U^{n_\Dt+1}_{i_\Dt,j_\Dt}-\delta_\Dt+\omega (\Dt+h)\\
&=G^{n_\Dt}(\cU)_{i_\Dt,j_\Dt}-\delta_\Dt+\omega (\Dt+h)\le G^{n_\Dt}(\Phi+\delta_\Dt)_{i_\Dt,j_\Dt}-\delta_\Dt+\omega (\Dt+h)\\
&= G^{n_\Dt}(\Phi)_{i_\Dt,j_\Dt}+\omega(\Dt+h).
\end{align*}
Hence
\begin{align*}
D^\alpha_\Dt &\varphi ((n_\Dt+1)\Dt,(i_{\Delta t}h,j_{\Delta t}h))\\&+    S\left((i_{\Delta t}h,j_{\Delta t}h),h,\varphi((n_\Dt+1)\Dt,(i_{\Delta t}h,j_{\Delta t}h)) ,[\varphi((n_\Dt+1)\Dt ,\cdot) ]_{i_{\Delta t},j_{\Delta t}}\right)\le\omega (\Dt+h)      
\end{align*}
and, using the consistency of the scheme and 
$((n_\Dt+1)\Dt,(i_{\Delta t}h,j_{\Delta t}h))\to(\hat t,\hat x)$ , we finally get
\[
\partial_t^\alpha \varphi (\hat t,\hat x)+H(\hat x,D\varphi(\hat t,\hat x))\le 0.
\]

\end{proof}
\begin{remark}
Error estimates for the  approximation   of   Hamilton-Jacobi  equations with standard time derivative have been discussed by several authors  (see for example \cite{cdi}, \cite{cl}).  We aim to investigate this point in the future.
\end{remark}
\section{Explicit solutions and numerical tests}\label{s5}
In this section, we implement  Lax-Friedrichs scheme  to test the convergence of the method. 
The    problems     we consider  are   in $\R^d$ and therefore the convergence   is not guaranteed by the results of the previous sections, which are in the periodic case.  Nevertheless, as the next examples show,  we obtain a correct approximation of the exact viscosity solution of the problems.
\subsection{Test 1}

First, we consider the following Hamilton-Jacobi equation:
\begin{equation}\label{eq:HJ_ex1}
\left\{
\begin{array}{ll}
\pdc u+ \frac{|Du|^2}{2}=0, \qquad &(t,x)\in (0,\infty)\times\R^d ,\\[6pt]
u_0(x)=\min\{0, |x|^2 - 1\}, &x\in\R^d.
\end{array}
\right.
\end{equation}
It is easy to verify that, if $\alpha=1$, then the unique viscosity solution of \eqref{eq:HJ_ex1} is given by
\begin{equation*}
u (t,x) = \min\left\{0, \frac{|x|^2}{1 + 2t} - 1\right\}.
\end{equation*}
We claim that a solution of \eqref{eq:HJ_ex1} for $\alpha \in (0,1)$ is given by  
\begin{equation}\label{eq:explicit_sol}
u_\alpha (t,x) = \min\left\{0, |x|^2 f(t) - 1\right\}
\end{equation}
with $f(t)$ non-negative function to be determined.
 Replacing into the equation \eqref{eq:HJ_ex1} for  $|x|\le \sqrt{1/f(t)}$ and taking into account the initial datum, we find that the function $f(t)$ has to satisfy  the fractional differential equation
\begin{equation}\label{eq:f}
\left\{
\begin{array}{l}
\pdc f(t)+ 2f(t)^2=0, \\[4pt]
f(0)=1.
\end{array}
\right.
\end{equation}
We check that \eqref{eq:explicit_sol} gives a viscosity solution of \eqref{eq:HJ_ex1}. Note that, since $u_\alpha$ is defined as the minimum of two solutions of the equation, by \cite[Lemma 4.1]{gn} it is a supersolution  in $\R^d$. Moreover, for $|x|\neq 1/\sqrt{f(t)}$, $u_\alpha$ is differentiable and the equation is satisfied in point-wise sense (see \cite[Prop. 2.10]{gn}), hence we only have to check the viscosity subsolution condition at $|x|=1/\sqrt{f(t)}$. Let $\varphi$ be a test function such that $u_\alpha-\varphi$ has a maximum point at $(t,1/\sqrt{f(t)})$ (for the other point we proceed in a similar way). Recalling  \eqref{eq:caputo_bis} and, since
\[ \varphi \big(t,\frac{1} {\sqrt{f(t)}}\big)-\varphi\big(t-\tau,\frac{1} {\sqrt{f(t)}}\big)\le u_\alpha\big(t,\frac{1} {\sqrt{f(t)}}\big)-u_\alpha\big(t-\tau,\frac{1} {\sqrt{f(t)}}\big)\quad \text{ for $\tau\le t$,}\]
 we see that $\varphi$ has to  satisfy
\begin{align*}
 \pdc \varphi\big(t,\frac{1} {\sqrt{f(t)}}\big)&=J[\varphi]\big(t,\frac{1} {\sqrt{f(t)}}\big)
+K_{(0,t)}[\varphi]\big(t,\frac{1} {\sqrt{f(t)}}\big)\le J[u_\alpha]\big(t,\frac{1} {\sqrt{f(t)}}\big)\\
&+K_{(0,t)}[u_\alpha]\big(t,\frac{1} {\sqrt{f(t)}}\big)=\pdc u_\alpha\big(t,\frac{1} {\sqrt{f(t)}}\big)=\frac{1}{f(t)}\pdc f(t).
\end{align*}
Moreover,  $0\le D\varphi(t,1/\sqrt{f(t)}) \le 2 f(t)/\sqrt{f(t)}$ and, therefore, substituting in \eqref{eq:HJ_ex1}, we get
\[\pdc \varphi \big(t,\frac{1} {\sqrt{f(t)}}\big)+
\frac 1 2 |D\varphi\big(t,\frac{1} {\sqrt{f(t)}}\big)|^2\le \frac{1}{f(t)}\pdc f(t)+ 2 f(t)= 0.  \]
Nevertheless, to compute $u_\alpha$, we need to compute the function $f$.
We look for a solution of \eqref{eq:f} in the form of a power series $f(t)=\sum_{n=0}^\infty f_nt^{\alpha n}$. Replacing in the equation  \eqref{eq:f} and observing that $\pdc t^0=0$, we have
\begin{equation}\label{eq:f1}
\sum_{n=1}^\infty f_n \pdc t^{\alpha n}+2 \sum_{n=0}^\infty  \sum_{m=0}^\infty f_nf_mt^{\alpha (n+m)}=0.
\end{equation}  
A straightforward computation gives 
\[ \pdc t^{\alpha n}=\beta_n t^{\alpha(n-1)},\]
where $\beta_n=\Gamma( \alpha n + 1)/ \Gamma(\alpha(n-1)+1)$. Replacing the previous identity in the equation \eqref{eq:f1}, we get
\begin{equation}\label{eq:series}
\sum_{n=1}^\infty f_n  \beta_n t^{\alpha (n-1)}+2 \sum_{n=0}^\infty  \sum_{m=0}^\infty f_nf_mt^{\alpha(n+m)}=0.
\end{equation}   
Collecting the terms of the same order and recalling that, by the initial condition in \eqref{eq:f}, 
$f_0=1$, we find
\[ 
\left\lbrace \begin{array}{lll}
\beta_1f_1+2 f_0^2=0&\quad\iff &f_1=-2\frac{\Gamma(1)}{\Gamma(\alpha+1)},\\[4pt]
\beta_2f_2+2 (f_0 f_1 + f_1 f_0)=0&\quad\iff &f_2=8\frac{\Gamma(1)}{\Gamma(2\alpha+1)},\\[4pt]
\beta_3f_3+2 (f_0f_2+f_1^2+f_2f_0)=0&\quad\iff &f_3=-2(f_1^2+2f_2)\frac{\Gamma(2\alpha+1)}{\Gamma(3\alpha+1)}, \\[4pt]
\vdots&&\vdots\\
\beta_nf_n+2\sum_{i+j=n-1}f_if_j=0&\quad\iff &f_n=-\frac{2}{\beta_n}\sum_{i+j=n-1}f_if_j.
\end{array}
\right.
 \]
From the previous relations, we can iteratively compute the coefficients of the power series $f(t)$ and we replace in \eqref{eq:explicit_sol}. Note that for $\alpha=1$, we get the power series of $\frac{1}{1+2t}$. As observed in \cite[Section 6.2.3]{po} where the problem \eqref{eq:f} for $\alpha=1/2$ is studied via power series expansion, finding the convergence interval of \eqref{eq:series} is not easy, but the series can be used for small values of $t$.
Indeed we numerically find that, for each $\alpha \in (0,1]$, there is a critical time $T_\alpha$ for which the power series $f(t)$ converges if $t \le T_\alpha$ and diverges if $t > T_\alpha$. The dependence of $T_\alpha$ on $\alpha$ is presented in Fig. \ref{fig:test1_critical_time}. Hence we use the representation formula \eqref{eq:explicit_sol}, with $f$ given by \eqref{eq:f}, for the exact solution of \eqref{eq:HJ_ex1} only for $t<T_\alpha$. For $t\ge T_\alpha$,  the viscosity solution of \eqref{eq:HJ_ex1}, which is defined for any $t$, cannot be anymore represented by means of \eqref{eq:explicit_sol} and we do not know  if there is an alternative explicit formula. Alternatively, we can calculate $f$ by means of a numerical methods for \eqref{eq:f}, but we do not pursue this approach here.

\begin{figure}
	\centering
	\includegraphics[width=60mm]{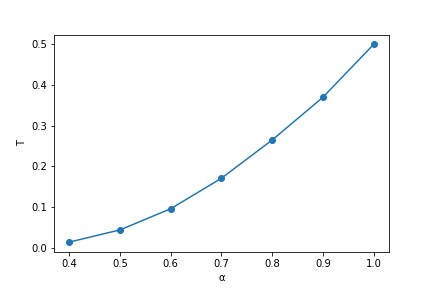}	
	\caption{Critical time for which the power series $f(t)$ converges}\label{fig:test1_critical_time}
\end{figure}

\begin{figure}
	\centering
	\includegraphics[width=60mm]{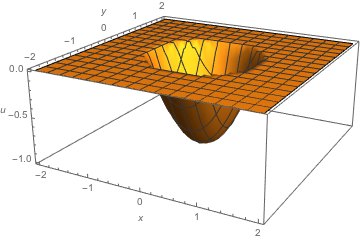}	
	\includegraphics[width=60mm]{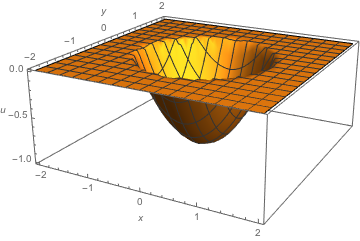}
	\caption{(A) Initial condition (B) Numerical solution at $t=0.2$}\label{fig:test1_sol_2d}
\end{figure}

\begin{figure}
	\centering
	\includegraphics[width=60mm]{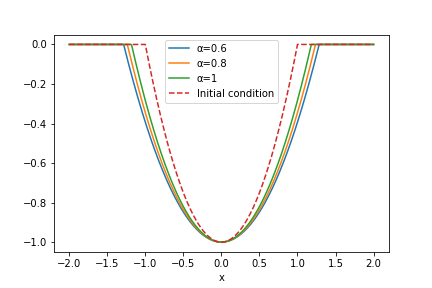}
	\includegraphics[width=60mm]{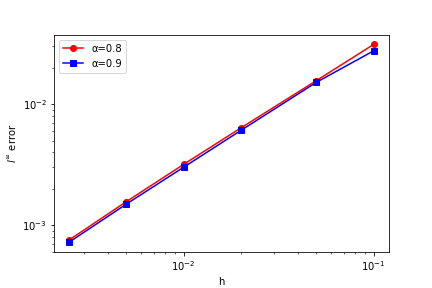}	
	\caption{(A) Numerical solutions at $t=0.2$ for different values of $\alpha$ (B) Convergence test for $t=0.2$}\label{fig:test1_different_alphas}
\end{figure}

The numerical solution at $t=0.2$ of \eqref{eq:HJ_ex1} where $\alpha = 0.8$ and $d=2$ computed by the Lax-Friedrichs scheme with $h = 10^{-1}$, $\Delta t = 10^{-3}$ and $\theta=1-2^{-\alpha}$ is provided in Fig. \ref{fig:test1_sol_2d}. We plot numerical solutions at $t=0.2$ for different values of $\alpha$ in Fig. \ref{fig:test1_different_alphas} (A) for $d=1$. We observe the convergent behavior of the solutions as $\alpha \to 1$. These solutions eventually converge to the solution of the classical case.

For the convergence test, we use $l^{\infty}$ error defined by the maximum difference between the exact and numerical solutions over all nodes. From Fig. \ref{fig:test1_different_alphas} (B), we determine that the convergence for the Lax-Friedrichs  scheme under the CFL condition is linear.

\subsection{Test 2}
\begin{figure}
	\centering
	\includegraphics[width=60mm]{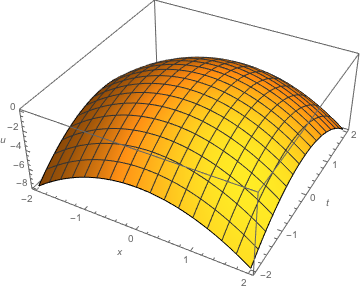}
	\includegraphics[width=60mm]{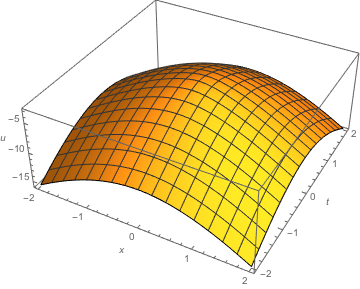}
	\caption{(A) Initial condition of \eqref{eq:HJ_ex2} for $\alpha = 0.8$ (B) Numerical solutions of \eqref{eq:HJ_ex2} when $\alpha = 0.8$ at $t=0.2$}\label{fig:test2_num_sol}
\end{figure}

We consider the Hamilton-Jacobi equation of the form
\begin{equation}\label{eq:HJ_ex2}
\left\{
\begin{array}{ll}
\pdc u+ |Du|=0, \qquad &(t,x)\in(0,\infty)\times\R^d,\\[6pt]
u_0(x)=-|x|^2, &x\in\R^d.
\end{array}
\right.
\end{equation}
For $\alpha=1$,   the unique viscosity solution of \eqref{eq:HJ_ex2} is  
\begin{equation*}
u (t,x) =  -(|x|+t)^2.
\end{equation*}
For $\alpha=1/2$, we look for a solution in the form
 \begin{equation}\label{eq:test2_sol}
u_{\frac 1 2} (t,x)=-|x|^2+\gamma t-2\beta|x|t^{1/2}.
 \end{equation} 
Computing the derivatives
\begin{align*}
&Du_{\frac 1 2}(t,x)=-2(|x|+\beta t^{1/2})\frac{x}{|x|}\\
&\pdh u_{\frac 1 2}(t,x)=-\frac{2\gamma}{\sqrt{\pi}}\sqrt{t}-\beta|x|\pi
\end{align*}
and replacing in the equation \eqref{eq:HJ_ex2}, we get
\[ \beta =\frac{2}{\sqrt{\pi}} , \quad \gamma =2,\]
and therefore 
\[u_{\frac 1 2} (t,x)=-|x|^2-2 t-\frac{4}{\sqrt{\pi}}|x|t^{1/2}.\]
We observe that the previous formula defines a viscosity solution of the problem. Indeed, for $x\neq 0$ and $t>0$, $u_{\frac 1 2}$ is differentiable and, since it is a solution in point-wise sense, it is also a viscosity solution  (see \cite[Prop. 2.10]{gn}). For $x=0$ and $t>0$, $u_{\frac 1 2}$ is not differentiable with respect to $x$, hence  the equation  has to be verified in viscosity sense. Since $u_{\frac 1 2}$ behaves as $-|x|$ near $x=0$, there cannot exist  a test function $\varphi$ such that $u_{\frac 1 2}-\varphi$ has a     minimum point at $(t,0)$, hence the supersolution condition is automatically satisfied. Moreover, for any test function $\varphi$ such that $u_{\frac 1 2}-\varphi$ has a maximum point at $(t,0)$, recalling  \eqref{eq:caputo_bis}, we see that $\varphi$ has to satisfy
\begin{align*}
	\pdh \varphi(t,0)=J[\varphi](t,0)+K_{(0,t)}[\varphi](t,0)\le J[u_{\frac 1 2}](t,0)+K_{(0,t)}[u_{\frac 1 2}](t,0)=\pdh u_{\frac 1 2}(t,0)=-\frac{4}{\sqrt{\pi}}t^{\frac 1 2},
\end{align*}
 and $|D\varphi(t,0)|\le 4 \sqrt{\pi}/\sqrt{t}$. Hence, also the subsolution condition at $(t,0)$ is satisfied.
For $\alpha\in (0,1)$, a similar computation gives that the solution of \eqref{eq:HJ_ex2}
is given by
\[ u_\alpha(t,x)=-|x|^2-
\frac{1}{\alpha\Gamma(2\alpha)}t^{2\alpha}
- \frac{2}{\alpha \Gamma(\alpha)}t^\alpha|x|. \]
Fig. \ref{fig:test2_num_sol} depicts the initial condition and the numerical solution at $t=0.2$ for $\alpha = 0.8$ obtained using the Lax-Friedrichs scheme in 2 dimensions. The parameter $\theta$ is always chosen equal to $1-2^{-\alpha}$. The numerical solutions corresponding to different values of $\alpha$ are plotted in Fig.  \ref{fig:test2_lax_fr_num_sol_different_alphas} (A) for $d=1$. We can see the same convergent behavior of the solutions as $\alpha \to 1$ as in the previous part. Moreover, from the convergence test in Fig. \ref{fig:test2_lax_fr_num_sol_different_alphas} (B), we observe that the convergence to the exact solution is linear. 

\begin{figure}
	\centering
	(A)\includegraphics[width=60mm]{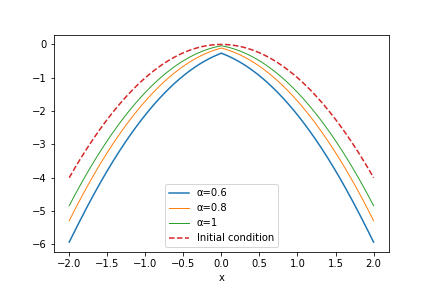}	
	(B)\includegraphics[width=60mm]{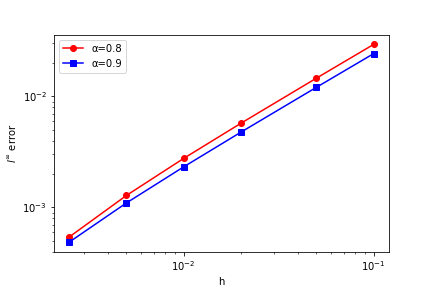}
	\caption{Lax-Friedrichs scheme: (A) Numerical solutions of \eqref{eq:HJ_ex2} at $T=0.2$ for different values of $\alpha$ (B) Convergence test for the fixed $\Delta t = 10^{-3}$} \label{fig:test2_lax_fr_num_sol_different_alphas}
\end{figure}


 
\end{document}